\documentclass[11pt]{amsart}
\usepackage{amscd}
\usepackage{amsfonts}
\usepackage{color}
\usepackage{amsmath,amssymb,amsthm,latexsym}
\usepackage{amscd}

\usepackage{multicol}
\usepackage{graphicx}
\textwidth 160mm \textheight 225mm

\oddsidemargin=10pt
\evensidemargin=10pt

 \topmargin  -7mm

\linespread{1.05}
\newtheorem{theorem}{Theorem}[section]
\newtheorem{proposition}[theorem]{Proposition}
\newtheorem{definition}[theorem]{Definition}
\newtheorem{corollary}[theorem]{Corollary}
\newtheorem{lemma}[theorem]{Lemma}
\usepackage{amsmath}
\usepackage{amsfonts}
\usepackage{amsmath,amsthm}
\usepackage{amscd}
\usepackage[all]{xy}
\numberwithin{equation}{section}
\theoremstyle{remark}
\newtheorem{remark}[theorem]{Remark}
\newtheorem{example}[theorem]{\bf Example}
\newcommand{\R}{\mathbb{R}}
\newcommand{\C}{\mathbb{C}}
\newcommand{\D}{\mathbb{D}}

\newcommand{\FC}{\mathcal{C}}
 \newcommand{\dd}{\mathrm{d}}
\newcommand{\blue}{\textcolor{blue}}

\begin{document}

\title[Symmetric Willmore surfaces: the orientation reversing case]{\bf{On symmetric Willmore surfaces in spheres II: the orientation reversing case}}
\author{Josef F. Dorfmeister, Peng Wang }

\maketitle

\begin{abstract}
In this paper we provide a systematic treatment of Willmore surfaces with orientation reversing symmetries
and illustrate the theory by (old and new) examples. We apply our theory to isotropic Willmore two-spheres in $S^4$ and derive a necessary condition for such ( possibly branched) isotropic surfaces to descend to (possibly branched) maps from $\R P^2$ to $S^4$. The Veronese sphere and several other examples  of non-branched Willmore immersions from $\R P^2$ to $S^4$ are derived as an illustration of the general theory. The Willmore immersions of  $\R P^2$,  just mentioned and  different from the Veronese sphere, are new to the authors' best knowledge.
\end{abstract}
\vspace{2mm}

{\bf Keywords:} Orientation reversing symmetries;  non--orientable Willmore surfaces; Willmore $RP^2$.\\

MSC(2010): 58E20; 53C43;  	53A30;  	53C35\\

\tableofcontents

\section{Introduction}

This is the second paper of these authors concerning Willmore surfaces with symmetries. As stated in the title, we consider symmetries inducing anti-holomorphic automorphisms on the Riemann surface on which the Willmore surface is defined. Non--orientable Willmore surfaces
$f: M\rightarrow S^{n+2}$ can then be looked at as quotients of orientable Willmore surfaces invariant under some fixed point free anti-holomorphic involution $\mu$.

Our work is motivated by our interests in non--orientable Willmore surfaces. So far there are very few results on this topic. It is therefore natural to develop a systematic theory on Willmore surfaces with orientation reversing symmetries. In particular, there are two different types of orientation reversing symmetries: the ones with and the ones without a fixed point.  Since we aim at non--orientable surfaces, we will discuss mainly the latter case in this paper.

We will mainly follow the notation  of   \cite{DoWa1,DoWa11,DoWa-sym1}  and apply the basic results listed/obtained there to this paper for the  discussion of  the orientation reversing case.

As one might expect, the treatment of the orientation reversing case is somewhat more difficult than the treatment of the orientation preserving case. The basic technical reason for this is that the image of the conformal Gauss map is a four-dimensional Lorentzian subspace of $\R^{n+4}_1$ which carries naturally an orientation and is located inside the naturally oriented $R^{n+4}_1$ and that an orientation reversing symmetry reverses the orientation of the image of the conformal Gauss map. As a consequence we will need to keep track of orientations in addition to the features discussed in  \cite{DoWa-sym1}.

To achieve our goal we therefore recall and somewhat refine the basic approach
 to the discussion of Willmore surfaces presented in \cite{DoWa1}. This has actually nothing to do with our eventual goal to apply the loop group method for the discussion of Willmore surfaces: we refine and extend the classical Willmore theory to be able to discuss the orientation reversing symmetries. This will be the main contents of Section 2.

In Section 3, we consider mainly the general loop group description of (orientable) Willmore immersions admitting orientation reversing symmetries. This is slightly more technical than the theory for the orientation preserving case. There are always two fundamental steps: firstly, starting from a surface and to derive the loop group data, like meromorphic or holomorphic extended frames and potentials; secondly, starting from loop group data, like  the meromorphic or holomorphic extended frames and potentials and to construct a surface.

The perhaps most important and most interesting application of these results is the description of non--orientable Willmore surfaces in $S^{n+2}$ which forms the main content of Section 4.  Note that in this case, the symmetry is a finite   or infinite order symmetry without fixed point.
For instance, a non--orientable Willmore surface can be viewed as the quotient of an oriented Willmore surface $y$ ``by an orientation reversing involution"  $\mu^*y=y$, i.e. a symmetry $(\mu,I)$,  where $\mu$ is an anti-holomorphic automorphism without any fixed point satisfying $\mu^2=Id.$

Moreover, it is well--known that up to bi--holomorphic equivalence, there is only one such map on $S^2$ and none in the other simply-connected cases. Therefore, if one considers orientation reversing symmetries for surfaces defined on the unit disk or the complex plane  (and generating freely acting groups), then one needs to deal with infinite order anti-holomorphic automorphisms of these complex domains.

We will therefore divide up the discussion into the  cases where the Willmore immersion is defined on $\C$ or on $S^2$. The case where $\D$ is the domain of the Willmore immersion  is left to a separate investigation.
In Section 4.3, we apply the general theory to non--orientable Willmore surfaces and explain the various steps of the general construction procedure.
In particular we show that it is not possible to obtain non--orientable Willmore immersions of
$\R P^2$ from invariant potentials.

To illustrate these results, in Section 5, we prove an efficient and necessary condition to obtain isotropic Willmore two-spheres in $S^4$ invariant under the anti--holomorphic automorphism $\mu(z)=-\frac{1}{\bar{z}}$, hence constructing (possibly branched) Willmore immersions from $\R P^2$ to $S^4$. See Theorem \ref{thm-rp2} for details. It is easy to derive the well--known Veronese surface in $S^4$ in this way.  But we also  obtain
 several new examples of Willmore immersions from $\R P^2$ to $S^4$
with various Willmore energies, which are different from the one of the Veronese surface. These surfaces are in fact minimal in $S^4$ and, to the authors' best knowledge, they are the first concrete known  examples of minimal immersions  from $\R P^2$ to $S^4$ which are different from the Veronese surface.

 Another  type of non--orientable surfaces are the (doubly infinite) Moebius strips. \blue{ We also give a complete characterization of  (equivariant) Willmore Moebius strips in $S^{n+2}$  by applying the theory in this paper.
Since it involves a lot of other methods, this part will be shown in another publication \cite{Do-Wa-equ}.}

\begin{figure}[ht]
\centering
$
\begin{array}{ccc} 
\includegraphics[height=65mm]{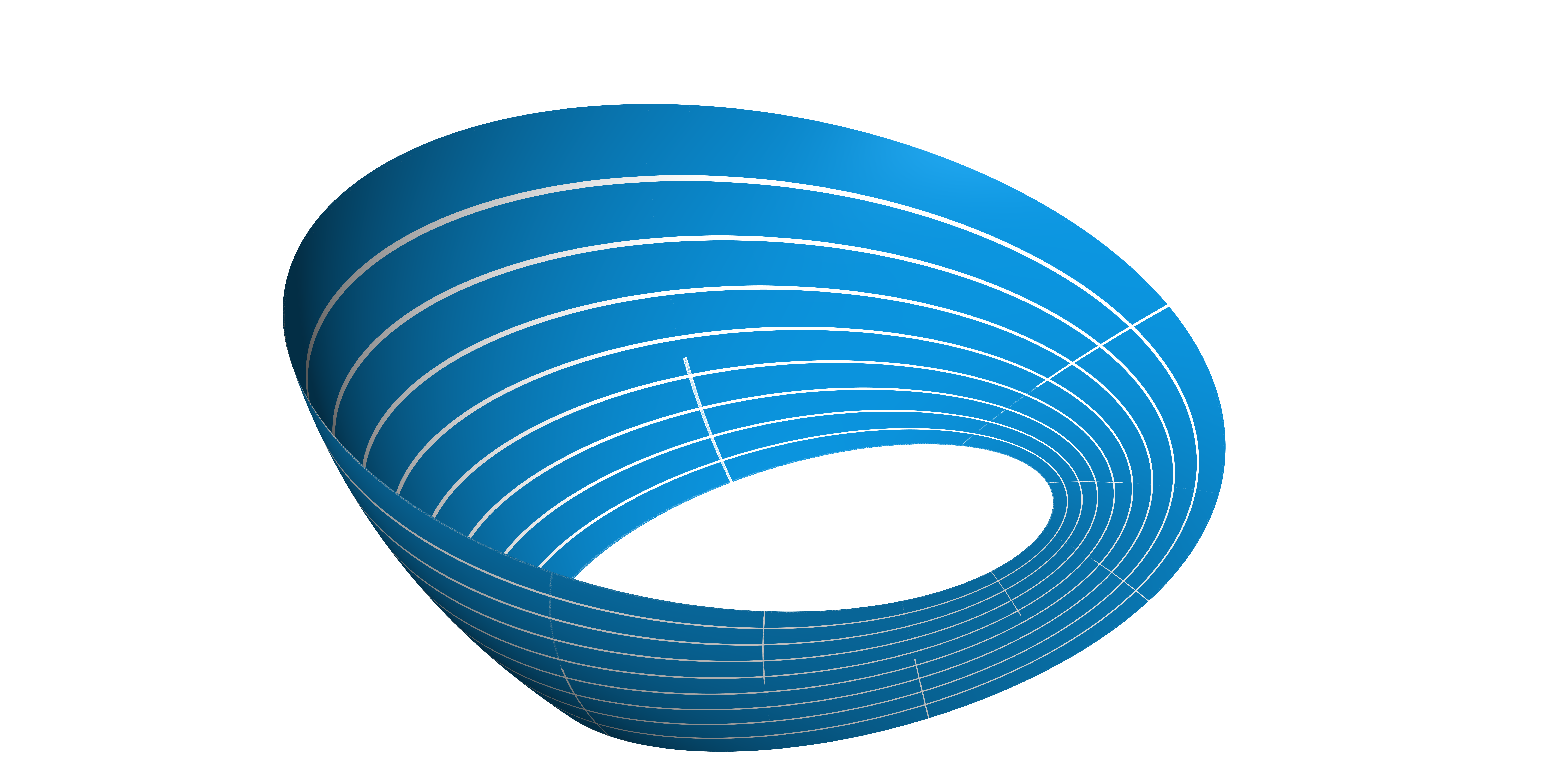}
\end{array}
$
\caption{Willmore Moebius strip computed with a numerical implentation of
DPW \cite{Brander} (with parameter X = bjorlingc(1,  0,  0,  -1/2,   -2-1/2) in \cite{Brander}).}
\label{figureexamples}
\end{figure}

 These examples indicate that this paper gives a blueprint for an efficient  discussion of all Willmore surfaces defined on non--orientable surfaces. While the technical details will stay difficult, the procedure is clear and straightforward.

We end this paper with an appendix including a proof of Theorem \ref{thm-rp2} which was separated out since it involves many technical computations. Moreover, we add some corrections and additions to \cite{DoWa-sym1}.

{ \em Throughout this paper we will use the notation introduced in \cite{DoWa-sym1}. In particular, by ``surface'' we mean ``conformal, branched surface'' unless the opposite is stated explicitly.} We would like to point out that for contractible Riemann surfaces $\D$ we will use the notion of ``frame" and ``extended frame" as usual, meaning globally defined and real analytic matrix functions on $\D$. But for $S^2$, by a ``frame" or ``extended frame" we  mean a matrix function defined and real analytic on $S^2\setminus \{\mbox{two points} \}. $ Then the loop group formalism, connecting surfaces to frames and connecting frames to ``potentials" (with meromorphic coefficient functions),  can be applied as usual. For more details we refer to Section 2.2.7 of this paper or Theorem 4.11 of \cite{DoWa1}.

\section{Review of Willmore surfaces and the loop group construction}

In this paper we discuss orientation reversing symmetries for  Willmore surfaces defined on  one of the simply connected Riemann surfaces: unit  disk,  complex plane or  Riemann sphere. We will use the notation $\D=$ for the unit disk as well as the complex plane. We use in all cases the procedure  outlined in \cite{DPW,DoWa1,DoWa11}.
\subsection{Willmore surfaces and their oriented conformal Gauss maps}

For the readers convenience  we briefly recall the  definition of the conformal Gauss map. We refer to  \cite{BPP,Ma,DoWa1,DoWa11} for details.

In \cite{BPP}, a natural and simple treatment of the conformal geometry of Willmore surfaces in $S^{n+2}$ is presented.
Here  we will use the same set--up. In particular, we will use the projective light cone model of the conformal geometry of
$S^{n+2}$ and discuss orientation  reversing conformal symmetries of Willmore surfaces in this model.

Let $\mathbb{R}^{n+4}_1$ denote Minkowski space, i.e. we consider $\mathbb{R}^{n+4}$ equipped with the Lorentzian metric
\[\langle x,y\rangle=-x_{0}y_0+\sum_{j=1}^{n+3}x_jy_j=x^t I_{1,n+3} y,\ \hspace{3mm}  I_{1,n+3}=diag(-1,1,\cdots,1).\]
Let $\mathcal{C}_+^{n+3}= \lbrace x \in \mathbb{R}^{n+4}_{1} |\langle x,x\rangle=0 , x_0 >0 \rbrace $
denote the forward light cone of $\mathbb{R}^{n+4}_{1}$.
It is easy to see that the projective light cone
$
Q^{n+2}=\{\ [x]\in\mathbb{R}P^{n+3}\ |\ x\in \mathcal{C}_+^{n+3}
\}$
with the induced conformal metric, is conformally equivalent to $S^{n+2}$.
Moreover, the conformal group of
$Q^{n+2}$ is exactly the projectivized orthogonal group $O^+(1,n+3)=O(1,n+3)/\{\pm1\}$ of
$\mathbb{R}^{n+4}_1$, acting on $Q^{n+2}$ by
\[
T([x])=[Tx],\,\,\ T\in O(1, n+3):=\{A\in Mat(n+4,\R)| A^tI_{1,n+3}A=I_{1,n+3}\}.
\]
 Here
$O^+(1,n+3):=\{A\in O(1, n+3)|  A \hbox{ preserves the forward light cone}\}.$

Let $y:M\rightarrow S^{n+2}$ be a conformal immersion from a Riemann surface $M$.
 Let $U\subset M$ be a contractible open subset. A local
lift of $y$ is a map $Y:U\rightarrow \mathcal{C}_+^{n+3} $ such
that \[[Y]=y.\] Two different local lifts differ by a scaling,
thus they are conformally equivalent to each other.
For our purposes we need to use canonical lifts, i.e. lifts satisfying
$|Y_z|^2 = \frac{1}{2}.$ Defining $\omega $ by $|y_z|^2 = \frac{1}{2}e^{2\omega}$  the canonical lift  (with respect
to $z$) is given by $Y = e^{-\omega} (1,y)$.
Noticing
$\langle Y,Y_{z\bar{z}}\rangle =-\langle Y_{z},Y_{\bar{z}}\rangle <0$, we see that
\begin{equation}
V={\rm Span}_{\mathbb{R}}\{Y,{\rm Re}Y_{z},{\rm Im}Y_{z},Y_{z\bar{z}}\}
\end{equation}
is an oriented rank--4 Lorentzian sub--bundle over $U$,
where the orientation is given by the obvious order of the given basis generating $V$.
For $\mathbb{R}^{n+4}_{1}$ we will always use the natural orientation given by the standard basis of $\R^{n+4}$.

Then there is a natural decomposition of the oriented trivial bundle $U\times \mathbb{R}^{n+4}_{1}=V\oplus V^{\perp}$, where $V^{\perp}$ is the orthogonal complement of $V$ with the induced natural orientation. Note that  $V$ and $V^{\perp}$,  and the orientation of $V$ are independent of the choice of $Y$
and $z$, and therefore are conformally invariant. In fact, we obtain a global conformally invariant bundle decomposition
$M\times \mathbb{R}^{n+4}_{1}=V\oplus V^{\perp}$. For any $p\in M$, we denote by $V_p$ the fiber of $V$ at $p$.

Given a canonical lift $Y$ we choose the frame $\{Y,{\rm Re}Y_{z},{\rm Im}Y_{z},N\}$ of
$V$, where $N \in\Gamma(V|_{U})$ is the unique section taking values in the forward light cone $\mathcal{C}_+^{n+3}$ and satisfies
\begin{equation}\label{eq-N}
N=2Y_{z\bar{z}}\mod Y,\mbox{ and } \langle N,Y_{z}\rangle=\langle N,Y_{\bar{z}}\rangle=\langle
N,N\rangle=0,~\langle N,Y\rangle=-1.
\end{equation}Next we define \emph{the conformal Gauss map} of $y$ (see, e.g. \cite{DoWa1}).
This conformal Gauss map of the immersion $y$ is basically given by
$p \mapsto V_p$.  But there are two additional features associated with this, namely
the $4$--dimensional Lorentzian subspace
$V|_p={\rm Span}_{\mathbb{R}}\{Y,{\rm Re}Y_{z},{\rm Im}Y_{z},Y_{z\bar{z}}\}_{p}$  has an orientation given by the ordered basis $\{Y,{\rm Re}Y_{z},{\rm Im}Y_{z},Y_{z\bar{z}}\}_{p}$ and the whole space $\R^{n+4}$ has, as always, its natural orientation.
In summary, the image of the conformal Gauss map is an oriented 4--dimensional Minkowski
space in the naturally oriented $\R^{n+4}$. It is well known and easy to verify that the set of all of such subspaces is the symmetric space
\begin{equation}
Gr_{1,3}(\mathbb{R}^{n+4}_{1}) = SO^+(1,n+3)/SO^+(1,3)\times SO(n).
\end{equation}
Note that here we use $SO^+(1,n+3)$, the matrices in $O^+(1,n+3)$ with determinant $1$, to describe the symmetric space, since it acts on $Gr_{1,3}(\mathbb{R}^{n+4}_{1})$ transitively.
\begin{definition} \label{def-gauss}
Let $M$ be a Riemann surface and $y:M\to S^{n+2}$ a conformal immersion.
 The  \emph{conformal Gauss map} of $y$ is defined by
\begin{equation}
\begin{array}{ccccc}
Gr_y : & M &\rightarrow & Gr_{1,3}(\mathbb{R}^{n+4}_{1}) &= SO^+(1,n+3)/SO^+(1,3)\times SO(n)\\
\ &  p & \mapsto & V_p &\ \\
\end{array}
\end{equation}
\end{definition}
 \begin{remark}\
\begin{enumerate}
\item  It is important to keep in mind  that the image $Gr_y (p) = V_p$ has the orientation discussed above.

\item Also note that our definition of $Gr_y$  implies that it maps to  the symmetric space $SO^+(1,n+3)/SO^+(1,3)\times SO(n)$ instead of $SO^+(1,n+3)/S(O^+(1,3)\times O(n))$, which is usually used in the literature about  Willmore surfaces.

\item Finally it is important to point out that $Gr_y$ depends on the conformal immersion $y$ as well as on the chosen complex structure of the Riemann surface $M$.
\end{enumerate}
\end{remark}

\begin{theorem}\cite{Bryant1984,Ejiri1988}
For every conformal immersion $y \rightarrow S^{n+2}$ the map $Gr_y$ is  a conformal map. Moreover,  $y$ is a Willmore surface if and only if $Gr_y$ is harmonic.
\end{theorem}

 Setting $Y_z=\frac{1}{2}(e_1-ie_2)$, $e_0 = \frac{1}{\sqrt{2}}(Y+N)$ and $\hat{e}_0 =\frac{1}{\sqrt{2}}(-Y+N)$, it is easy to verify that the ordered, linearly independent vectors $\{Y,{\rm Re}Y_{z},{\rm Im}Y_{z},Y_{z\bar{z}}\}$, $\{Y,N,e_1,e_2\}$ and $\{e_0,\hat e_0,e_1,e_2\}$ span the same ordered real vector space.
Hence we obtain
\begin{equation}\label{eq-gauss-map}
\begin{split}
Gr_y :=
{\rm Span}_{\mathbb{R}}\{Y,{\rm Re}Y_{z},{\rm Im}Y_{z},Y_{z\bar{z}}\}
=  {\rm Span}_{ \mathbb{R}} \{Y, N, e_1,e_2 \}= {\rm Span}_{ \mathbb{R}} \{e_0, \hat{e}_0,  e_1,e_2 \},
\end{split}
\end{equation}

Let $\psi_1,\cdots \psi_n$ be an orthonormal basis of $V^{\perp}$
inducing  the uniquely determined orientation of $V^{\perp}$.  Then a lift of $Gr_y$ into $SO^+(1,n+3)$ is given by (see Proposition 2.2 of \cite{DoWa1}):
\begin{equation}\label{eq-F}
F=\left(\frac{1}{\sqrt{2}}(Y+N),\frac{1}{\sqrt{2}}(-Y+N),e_1,e_2,\psi_1,\cdots,\psi_n\right)=\left(e_0,\hat{e}_0,e_1,e_2,\psi_1,\cdots,\psi_n\right).
\end{equation}

\begin{remark}
For later purposes we collect a few additional remarks.
\begin{enumerate}
\item The Lie algebra of $SO^+(1,n+3)$ is
\begin{equation}\label{eq-so(1,n+3)}
\mathfrak{so}(1,n+3)=\mathfrak{g}=\{X\in gl(n+4,\mathbb{R})|X^tI_{1,n+3}+I_{1,n+3}X=0\}.
\end{equation}
\item The symmetric space $ Gr_{1,3}(\mathbb{R}^{n+4}_{1})  = SO^+(1,n+3)/ SO^+(1,3)\times SO(n)$ is defined by the involution
\begin{equation}
\sigma(A):SO^+(1,n+3)\rightarrow SO^+(1,n+3),\sigma(A)=\hat{D}A\hat{D}^{-1},
\end{equation}
with $\hat{D}=I_{4,n}=$diag$(-1,-1,-1,-1,1,\cdots,1)$. Note that the isotropy group
is not the whole fixed point group of $\sigma$.
\begin{equation}
K = SO^+(1,3)\times SO(n)\neq \hbox{Fix}_{\sigma}=S(O^+(1,3)\times O(n)).
\end{equation}

\end{enumerate}
\end{remark}

\subsection{ Willmore surfaces with antiholomorphic symmetry}
Let $M$ be a Riemann surface and $\tilde{M}$ its universal cover.
Then $\tilde{M}$ is the unit disk or $\C$, or $S^2=\C\cup\{\infty\}$, together with a complex structure.

As in \cite{DoWa-sym1}, a symmetry of some Willmore surface
$y : M \rightarrow S^{n+2}$ consists of a pair of maps $(\mu, S)$ satisfying \[y\circ\mu(z)=y(\mu(z)) = S(y(z)),\]
where  $S$ is a conformal automorphism of $S^{n+2}$ which  maps $y(M)$ onto itself and $\mu: M \rightarrow M$ is a conformal  automorphism of $M$.
Similar to Theorem 3.1 of \cite{DoWa-sym1} one can show that such pairs of maps occur under natural assumptions.
Since the orientation preserving case is contained in \cite{DoWa-sym1}, we will assume from now on that $\mu$ is orientation reversing, i.e. anti--holomorphic relative to the complex structure given once and for all on $M$.
{\it The discussion in subsections $2.2.1 - 2.2.6$ will
be carried out for $M = \D$, an open and connected subset of $\C$.}
However, we will always assume that  global frames can be chosen on $\D$.

For a Willmore immersion $y$ with a symmetry $(\mu, S)$ and a local lift $Y$, we obtain (see e.g Theorem 3.1 of \cite{DoWa-sym1})
\begin{equation}\label{sym}
y\circ\mu(z) =[Y\circ\mu(z)]=[\hat{S} Y(z)]=  S(y(z))\ \hbox{ for all } \ z \in \D,
\end{equation}
where  $\hat S\in O^+(1,n+3)$ denotes the natural extension of $S$ to Minkowski space $\R^{n+4}_1$.

Moreover, using the conformal Gauss map as defined  above, one considers the map
$ z \rightarrow Gr_y(z)$ (see \eqref{eq-gauss-map}).
{\it To simplify notation we will use just $f$ for $Gr_y$ if no confusion can arise.}

If \eqref{sym} is satisfied, it is important to consider the conformal Gauss maps
 $f,$ $\hat{f}$ and $\tilde{f}$ of the Willmore surfaces $y,$ $\hat{y} =S y$ and $\tilde{y} = y \circ \mu$ respectively and to discuss their relations.
 In our approach to investigate Willmore surfaces the next step is to consider frames for the conformal Gauss map. So the relations between the frames $F$, $\hat{F}$ and $\tilde{F}$ associated with  $f,$ $\hat{f}$ and $\tilde{f}$  respectively need to be discussed.

\subsubsection{ The Willmore surface $\hat y= Sy$}

In this case it is straightforward to follow the definition of the conformal Gauss map given above and we obtain
\begin{equation}\label{V-hat}
\hat{V} =   {\rm Span}_{ \mathbb{R}} \{\hat{S}Y,  \hat{S}N,  \hat{S}e_1, \hat{S}e_2 \} =\hat{S} \hspace{1mm}{\rm Span}_{ \mathbb{R}} \{Y, N, e_1,e_2 \} = \hat{S} V.
\end{equation}
as vector spaces  in the naturally ordered $\R^{n+4}_1$ with the orientation given by the indicated ordered bases.
As a consequence we obtain
\begin{equation} \label{f-hat}
\hat f = \hat{S} f.
\end{equation}

\subsubsection{The Willmore surface $\tilde y = y \circ \mu$}
The general procedure implies (defining real valued functions $\tau$ and $\theta$ by
 $\frac{\partial\mu}{\partial\bar {z}}=e^{\tau+i\theta}$ ),
\begin{equation}
\tilde{Y}(z) = e^{-\tau} Y \circ \mu (z).
\end{equation}
From this we obtain
\begin{equation}
\tilde{Y}(z)_z = (e^{-\tau}Y\circ\mu)_z=-\tau_{z}e^{-\tau}Y\circ\mu+e^{-\tau}\cdot\frac{\partial\bar\mu}{\partial z}(Y_{\bar{z}})\circ\mu,
\end{equation}
and infer
\begin{equation*}
\begin{split}
Y\circ\mu&=e^{\tau}\tilde Y,\\
 e_1\circ\mu &=\tilde{e}_1 \cos\theta+\tilde{e}_2\sin\theta-\sqrt{2}(a\cos\theta+b\sin\theta)\tilde{Y},\\
 e_2\circ\mu &=\tilde{e}_1 \sin\theta-\tilde{e}_2\cos\theta-\sqrt{2}(a\sin\theta-b\cos\theta)\tilde{Y},\\
N\circ\mu&= e^{-\tau}\tilde{N}-\sqrt{2}a e^{-\tau}\tilde{e}_1-\sqrt{2} b e^{-\tau}\tilde{e}_2+(a^2+b^2)e^{-\tau}\tilde{Y},
\end{split}
\end{equation*}
with $ a=-Re(\sqrt{2}\tau_z),~~ b=Im(\sqrt{2}\tau_z).$
As a consequence, we obtain the matrix equation
\begin{equation} \label{relation}
 (e_0,\hat{e}_0,e_1,e_2)  \circ \mu =(\tilde{e}_0, \hat{\tilde{e}}_0, \tilde{e}_1, \tilde{e}_2) k_1,
\end{equation}
where  $e_0=\frac{1}{\sqrt{2}}(Y+N),$ $\hat{e}_0=\frac{1}{\sqrt{2}}(-Y+N)$, $\tilde{e}_0= \frac{1}{\sqrt{2}}(\tilde{Y}+\tilde{N}),$ $\hat{\tilde{e}}_0= \frac{1}{\sqrt{2}}(-\tilde{Y}+\tilde{N}),$  and
 \begin{equation}\label{eq-k1}
 k_1=\left(
                                   \begin{array}{cccc}
                                   \frac{ e^{\tau}+ e^{-\tau}(a^2+b^2+ 1) }{2}  & \frac{- e^{\tau}+e^{-\tau}(a^2+b^2 +1) }{2}   & -a\cos\theta-b\sin\theta  &  -a\sin\theta+b\cos\theta  \\
                                   \frac{- e^{\tau}-e^{-\tau}(a^2+b^2 -1) }{2}  &\frac{ e^{\tau}- e^{-\tau}(a^2+b^2-1) }{2}    &  a\cos\theta+b\sin\theta  & a\sin\theta-b\cos\theta \\
                                     -ae^{-\tau} &  -ae^{-\tau}   & \cos\theta & \sin\theta \\
                                     -be^{-\tau}  &  -be^{-\tau} & \sin\theta & -\cos\theta \\
                                   \end{array}
                                 \right).
 \end{equation}

It is straightforward to verify that $k_1\in O^+(1,3)$ and $\det k_1 = -1$. Therefore the vector space $\tilde{V} =  {\rm Span}_{ \mathbb{R}}  \{\tilde{Y}, \tilde{N}, \tilde{e}_1, \tilde{e}_2 \}$
is the same as $V\circ\mu={\rm Span}_{\mathbb{R}}\{Y,N,e_1,e_2\} \circ \mu$, but has the opposite orientation. Hence these two spaces correspond to different points in the Grassmannian $ Gr_{1,3}(\mathbb{R}^{n+4}_{1})  = SO^+(1,n+3)/ SO^+(1,3)\times SO(n)$.

However, there is a natural  involution
\begin{equation}\label{eq-star-op}
\ ^\star:  Gr_{1,3}(\mathbb{R}^{n+4}_{1})  \rightarrow  Gr_{1,3}(\mathbb{R}^{n+4}_{1}),
\end{equation}
 which maps a point  in the Grassmannian (i.e. an oriented vector space) into the same vector space but with the opposite orientation.
 This involution is the deck transformation of the two-fold covering
 \begin{equation}
 \begin{split}
  SO^+(1,n+3)/ SO^+(1,3)\times SO(n) \rightarrow
  SO^+(1,n+3)/S(O^+(1,3)\times O(n)).
\end{split}
 \end{equation}
 Thus we obtain
 \begin{proposition}
 The Gauss maps $\tilde{f}$ of $\tilde{y} = y \circ \mu$  and $f$ of $y$ are related by the equation
 \begin{equation}\label{eq-f-star}
 \tilde{f}^\star = f \circ \mu,\ \hbox{ or equivalently }
 \tilde{f} = (f \circ \mu)^{\star}.
 \end{equation}
 \end{proposition}

\subsubsection{ The relations between $\hat f$ and $\tilde f$}

Since $\hat{y}=\tilde{y}$ by definition \eqref{sym}, we have
\begin{equation}\label{f-hat-tilde}
\hat{f} =  \tilde{f}.
\end{equation}

Next we consider the relations between the corresponding frames.

 \subsubsection{A frame for the Willmore surface $\hat{y} = Sy$}

 Consider the frame $F$ of $y$ defined by \eqref{eq-F}, it is straightforward to see that $\hat F=\hat S F$ is a frame of $\hat{f}$ when $\det \hat S=1$. When $\det\hat S=-1$, $\hat F$ takes values in $O^+(1,n+3)$, and  not in $SO^+(1,n+3)$. But since the first four columns of $\hat F$ still provide $\hat f$ with the same orientation, we see that one only needs  to change the orientation of the remaining $n$ columns. This proves the following proposition
 \begin{proposition}
 With the notation introduced above we consider the surface  $\hat{y}=[\hat Y]=[\hat S Y]= S y$. Then the following statements hold
\begin{enumerate}
\item If $\det \hat{S} = 1,$ then
 \begin{equation}
 \hat {F} = \hat{S} F
 \end{equation}
 is a frame for $\hat y$.
\item If $\det \hat{S} = -1,$ then
 \begin{equation}
 \hat{F} = \hat{S} F P_2
 \end{equation}
 is a frame for $\hat y$, where
  $P_2=\hbox{diag}(I_4,-1,I_{n-1})\in O^+(1,n+3)$.
 \end{enumerate}
 \end{proposition}

 Note , the last statement simply says that we have changed the the original orthonormal bases of $V^\perp$ to another one which gives the whole (ordered) basis the same orientation as $\mathbb{R}^{n+4}_{1}$.

\subsubsection{A frame for the Willmore surface $\tilde{y} = y \circ \mu$}

In this case we know by \eqref{eq-f-star} that  $\tilde{f}=(f\circ\mu)^{\star}$ holds.
Thus we consider the frame
$\tilde{F} =(\tilde{e}_0,\tilde{\hat{e}}_0,\tilde{e}_1,\tilde{e}_2,\tilde{\psi}_1,\cdots,\tilde{\psi}_n)$
of $\tilde f$ from and compare it to the naturally formed matrix
\[ F\circ\mu = (e_0 \circ \mu ,\hat e_0\circ \mu , e_1 \circ \mu , e_2 \circ \mu, \psi_1 \circ \mu \cdots \psi_n \circ \mu)\in SO^+(1,n+3).\]

First we recall equation \eqref{relation}. This equation implies that the first four columns of $\tilde{F}$ and of $F \circ \mu$ span the same vector space, but induce opposite orientations, since $\det k_1 = -1.$ As a consequence, the vector spaces spanned by the last n columns of $\tilde{F}$ and of $F \circ \mu$ also are  equal and also have opposite orientation. Multiplying $ \tilde{F} $ on the right by $P_1P_2$  where
$P_1=\hbox{diag}(1,1,1,-1,,I_n)\in O^+(1,n+3)$ and $P_2$ is as above, we change the orientation of the vector spaces spanned by the first four and the last n columns so that they coincide with the corresponding oriented vector spaces. Consider now the matrices $\tilde{k}_1 = diag(k_1,I_n)P_1$ and
  $\tilde{k}_2 = diag(I_4, k_2)P_2,$ where $k_2$ maps the vectors  $\tilde{\psi}_1,\cdots,\tilde{\psi}_n$ to the vectors $\psi_1, \cdots, \psi_n$.
  Then $\tilde{k}_1, \tilde{k}_2 \in SO^+(1,3)\times SO(n)$ and we obtain

  \begin{proposition} Consider  the surface $ \tilde{y} = y \circ \mu : \D \rightarrow S^{n+2}$.
 Then each frame $\tilde{F}$ for $\tilde{f}$ is related to $F \circ \mu$ in the form
 \begin{equation}
  \tilde{F} = F \circ \mu \tilde k^{-1} P_1 P_2,
 \end{equation}
 where $\tilde{k} =  \tilde k_1  \tilde k_2 $ with $ \tilde k_1 = diag(k_1, I_n)P_1$ and $ \tilde{k}_2 = diag(I_4, k_2)P_2 $, just defined above,  are contained in $ SO^+(1,3)\times SO(n)$.
 \end{proposition}

\subsubsection{The frame equation for the symmetry $(\mu, S)$}
Since the symmetry relation \eqref{sym} implies $\tilde{f} = \hat{f}$, for the frames this
implies  $\tilde{F}=\hat{F} \check{k}$ with $\check{k}=diag\{\check{k}_1,\check{k}_2\} \in SO^+(1,3) \times SO(n)$. Summing up, we obtain
\begin{theorem}\label{th-symm-f} Let $y: \D \rightarrow  S^{n+2}$ be a Willmore immersion and $\left(\mu , S\right)$  an orientation--reversing symmetry. Then  \eqref{sym} holds.
Moreover,
\begin{enumerate}
\item There exists some real function $\tau$ such that $e^{\tau}=\left|\frac{\partial\mu}{\partial\bar {z}}\right|$  and
$e^{-\tau}Y\circ\mu=\hat{S}Y.$
\item The conformal Gauss map $f$ of $y$ satisfies the equation
\begin{equation}\label{eq-sym}
 (f \circ\mu)^{\star} = \hat{S}  f,  \hspace{2mm} \mbox{equivalently} \hspace{2mm}
 f \circ\mu = \hat{S}  f^\star
\end{equation}
\item
Let $F$ denote the moving frame associated with $f$ as defined in \eqref{eq-F}. Then there exists some $\hat k=diag(\hat{k}_1,\hat{k}_2): \D \rightarrow SO^+(1,3)\times SO(n)$ such that
\begin{enumerate}
\item If $\det \hat{S} = 1,$ then
 \begin{equation}\label{eq-Frame-1}
 F\circ\mu \hat k^{-1} P_1 P_2= \hat S F.
 \end{equation}
\item If $\det \hat{S} = -1,$ then
 \begin{equation}\label{eq-Frame-2}
 F\circ\mu  \hat k^{-1} P_1 P_2= \hat S FP_2.
 \end{equation}
 \end{enumerate}
\end{enumerate}
\end{theorem}

\begin{remark}\
 \begin{itemize}
 \item Note that both, $\hbox{Ad}(P_1)$ and  $\hbox{Ad}(P_2)$, preserve $SO^+(1,3)\times SO(n)$. Therefore the order in the multiplication $\hat k P_1 P_2$ is not essential.
\end{itemize}
\end{remark}

\subsubsection{The $S^2$ case} \label{section-s2}
 For the case of $M = S^2$, the conformal Gauss map $f$  of a Willmore immersion $y$ does not have a global frame. As in   \cite{DoWa1} and  \cite{DoWa-sym1} we choose two charts $\D=S^2\setminus\{z_0\}$ with $z_0=0$ or $z_0=\infty$.
Then $\D$ is contractible whence on $\D$ there exists  a global frame $F$ for $f$.

Note that for any orientation reversing symmetry $(\mu,S)$ of $y$ and $z_0$ as above we have two possibilities, $\mu(z_0)=z_0$ or $\mu(z_0)\neq z_0$.
For the first case, Theorem \ref{th-symm-f} holds without any changes.
For the second case, $\mu$ is defined on $\tilde\D=S^2\setminus\{z_0,\mu(z_0)\}$ and
$\mu:\tilde\D\rightarrow \tilde\D$ is an anti-holomorphic automorphism. As a consequence, Theorem \ref{th-symm-f}  hold on $\tilde\D$.

In later applications of our theory to the case of orientation reversing symmetries of Willmore surfaces $y: M = S^2  \rightarrow S^{n+2}$ we will always apply an argument like the one just given.

\subsection{ The loop group method}

Here we recall briefly the loop group construction of Willmore surfaces in spheres introduced in \cite{DoWa1} (see also \cite{DoWa-sym1}).
The basic idea has two steps. Firstly there is a relationship between Willmore surfaces and harmonic maps into a symmetric space $G/K$.
Secondly there is a way to describe these harmonic maps by some special meromorphic or holomorphic 1-forms, via the Birkhoff decomposition and the Iwasawa decomposition of the loop group associated with $G$.

  \subsubsection{Loop groups, decomposition theorems, and the loop group method}
 First we briefly recall the DPW construction for harmonic maps. Let $G$ be a connected, real, semi-simple non-compact matrix Lie group and let $G/K$ be the inner symmetric space defined by  the involution $\sigma: G\rightarrow G$, with $Fix^{\sigma} G\supseteq K \supseteq (Fix^{\sigma} G )^\circ$, where $H^\circ$ denotes the identity component of the group $H$.

Let $\mathfrak{g}$, $\mathfrak{k}$ be the Lie algebras of $G$ and $K$ respectively and let $\mathfrak{g}=\mathfrak{k}\oplus\mathfrak{p}$ be the Cartan decomposition induced by $\sigma$, with
$ [\mathfrak{k},\mathfrak{k}]\subset\mathfrak{k},
~~~ [\mathfrak{k},\mathfrak{p}]\subset\mathfrak{p}, ~~~
[\mathfrak{p},\mathfrak{p}]\subset\mathfrak{k}.$
Let $\pi:G\rightarrow G/K$ denote the projection of $G$ onto $G/K$.
Let  $\mathfrak{g^{\mathbb{C}}}$ be the complexification of $\mathfrak{g}$ and $G^{\mathbb{C}} $  the connected complex matrix  Lie group with Lie algebra $\mathfrak{g^{\mathbb{C}}}$.
Setting $\tau(g)=\bar g$ for $ g \in G^{\mathbb{C}}$ yields $G=(Fix^{\tau}G^{\C})^{\circ}$.
Moreover, $\sigma$ extends to
the complexified Lie group $G^\C$ and  commutes with $\tau$. Then  $K^{\mathbb{C}} = \hbox{Fix}^{\sigma}(G^{\mathbb{C}})^{\circ}$ denotes  the smallest (connected) complex subgroup of $G^{\C}$ containing $K$.

 Let $\mathcal{F}:M\rightarrow G/K$ be a conformal harmonic map from a connected Riemann surface $M$.
Let $U\subset M$ be an open contractible subset.
Then there exists a frame $F: U\rightarrow G$ such that $\mathcal{F}=\pi\circ F$. One has the Maurer--Cartan form $\alpha=F^{-1} \dd F$ and the Maurer--Cartan equation $\dd \alpha+\frac{1}{2}[\alpha\wedge\alpha]=0.$
Moreover, decomposing $\alpha$ with respect to $\mathfrak{g}=\mathfrak{k}\oplus\mathfrak{p}$ and  the complexification $T^*M^{\mathbb{C}}=T^*M'\oplus T^*M''$, we obtain
\[\alpha=\alpha_{\mathfrak{p}}'+\alpha_{ \mathfrak{k}  } +\alpha_{\mathfrak{p}}'', \hbox{ with }
\alpha_{\mathfrak{k  }}\in \Gamma(\mathfrak{k}\otimes T^*M),\
\alpha_{ \mathfrak{p }}'\in \Gamma(\mathfrak{p}^{\C}\otimes T^*M'),\
\alpha_{ \mathfrak{p }}''\in \Gamma(\mathfrak{p}^{\C}\otimes T^*M'').\] Set
 \begin{equation}\label{spec-par}
\alpha_{\lambda}=
\lambda^{-1}\alpha_{\mathfrak{p}}'+\alpha_{\mathfrak{k}}+
\lambda\alpha_{\mathfrak{p}}'', \hspace{5mm}  \lambda\in S^1.
\end{equation}

It is well--known  (\cite{DPW}) that the map  $\mathcal{F}:M\rightarrow G/K$ is harmonic if and only if
\begin{equation}\label{integr}\dd
\alpha_{\lambda}+\frac{1}{2}[\alpha_{\lambda}\wedge\alpha_{\lambda}]=0,\ \ \hbox{for all}\ \lambda \in S^1.
\end{equation}
 From equation \eqref{integr} we infer that there exists a solution $F(z,\bar z,\lambda)$  to the equation $\dd F(z,\bar z,\lambda)= F(z,\bar z,\lambda)\alpha_{\lambda}$ on $U \subset M.$
 Such a solution is uniquely determined, if we impose an
initial condition  $F(z_0,\bar z_0,\lambda)=F_0(\lambda)\in \Lambda G_{\sigma},~z_0 \in U$, where $z_0$ is chosen arbitrarily in $U$. The solution   $F(z, \bar z, \lambda)$
is called the {\em extended frame} of the harmonic map $\mathcal{F}$ (normalized at the base point $z=z_0$).
Set $\mathcal{F}_{\lambda}=F(z,\bar z,\lambda)\mod K$. Then  $\mathcal{F}_{\lambda}$ is harmonic for every $\lambda \in S^1$ and this family of harmonic maps will be called the ''associated family of the harmonic map''
$\mathcal{F}=\mathcal{F}_{\lambda}|_{\lambda=1}.$
Clearly,  $F(z,\bar z,\lambda)$ is a local lift of $\mathcal{F}_{\lambda}$.

Recall that the twisted loop groups of $G$ and $G^{\mathbb{C}}$  are defined as follows:
\begin{equation*}
\begin{array}{llll}
\Lambda G^{\mathbb{C}}_{\sigma} &=\{\gamma:S^1\rightarrow G^{\mathbb{C}}~|~ ,\
\sigma \gamma(\lambda)=\gamma(-\lambda),\lambda\in S^1  \},\\[1mm]

\Lambda G_{\sigma}  &=\{\gamma\in \Lambda G^{\mathbb{C}}_{\sigma}
|~ \gamma(\lambda)\in G, \hbox{for all}\ \lambda\in S^1 \},\\[1mm]

\Lambda^{-} G^{\mathbb{C}}_{\sigma}  ~&=
\{\gamma\in \Lambda G^{\mathbb{C}}_{\sigma}~
|~ \gamma \hbox{ extends holomorphically to } |\lambda|>1 \cup\{\infty\} \},\\[1mm]
\Lambda_{*}^{-} G^{\mathbb{C}}_{\sigma} ~&=\{\gamma\in \Lambda G^{\mathbb{C}}_{\sigma}~
|~ \gamma \hbox{ extends holomorphically to } |\lambda|>1 \cup\{\infty\},\  \gamma(\infty)=e \},\\[1mm]

\Lambda_{L}^{-} G^{\mathbb{C}}_{\sigma} ~&=\{\gamma\in \Lambda G^{\mathbb{C}}_{\sigma}~
|~ \gamma \hbox{ extends holomorphically to } |\lambda|>1 \cup\{\infty\},\  \gamma(\infty)\in L \},\\[1mm]

\Lambda^{+} G^{\mathbb{C}}_{\sigma}  &=\{\gamma\in \Lambda G^{\mathbb{C}}_{\sigma}~
|~ \gamma \hbox{ extends holomorphically to the disk} \hspace{1mm} |\lambda|<1,  \},\\

\Lambda_{*}^{+} G^{\mathbb{C}}_{\sigma}  &=\{\gamma\in \Lambda G^{\mathbb{C}}_{\sigma}~
|~ \gamma \hbox{ extends holomorphically to the domain } |\lambda|<1,\  \gamma(0)=e \},\\[1mm]

\Lambda_L^{+} G^{\mathbb{C}}_{\sigma}  &=\{\gamma\in \Lambda G^{\mathbb{C}}_{\sigma}~
|~ \gamma \hbox{ extends holomorphically to the disk} \hspace{1mm} |\lambda|<1,  \gamma(0)\in L\},\\

\end{array}\end{equation*}
where $L\subset K^{\C}$ is a subgroup. When $L=K^{\C}$, we denote the corresponding loop group  by $\Lambda_\mathcal{C}^{+} G^{\mathbb{C}}_{\sigma}$.

Now we restrict to the loop groups related with Willmore surfaces \cite{DoWa1,DoWa-sym1}.
The  Iwasawa decomposition states \cite{DPW,DoWa1} that there exists a closed, connected solvable subgroup $S \subseteq K^\C$ such that
the multiplication $\Lambda G_{\sigma}^{\circ} \times \Lambda^{+}_S G^{\mathbb{C}}_{\sigma}\rightarrow
\Lambda G^{\mathbb{C}}_{\sigma}$ is a real analytic diffeomorphism onto the open subset
$ \Lambda G_{\sigma}^{\circ} \cdot \Lambda^{+}_S G^{\mathbb{C}}_{\sigma}  \subset(\Lambda G^{\mathbb{C}}_{\sigma})^{\circ}$.
The  Birkhoff decomposition states that
the multiplication $\Lambda_{*}^{-} {G}^{\mathbb{C}}_{\sigma}\times
\Lambda^{+}_\FC {G}^{\mathbb{C}}_{\sigma}\rightarrow
\Lambda {G}^{\mathbb{C}}_{\sigma}$ is an analytic  diffeomorphism onto the
open and dense subset $\Lambda_{*}^{-} {G}^{\mathbb{C}}_{\sigma}\cdot
\Lambda^{+}_\FC {G}^{\mathbb{C}}_{\sigma}$ {\em ( big Birkhoff cell )}.

Now we state the DPW construction of Willmore surfaces in the spirit of  \cite{DPW}.

\begin{theorem}\label{thm-DPW}\cite{DPW}, \cite{DoWa1}, \cite{Wu}.
Let $\D$ be  the unit disk or $\C$ itself, with complex coordinate $z$ and fix a base point $z_0 \in \D$.
\begin{enumerate}
\item
Let $\mathcal{F}: \D \rightarrow G/K$ be a harmonic map with an extended frame $F(z,\bar z,\lambda)$ satisfying  $F(z_0,z_0,\lambda)=e$.
Then there exists a discrete subset $\mathcal{S}\subset \D$ such that for all $z \in \D \setminus \mathcal{S}$ there exists the Birkhoff decomposition
\[F(z,\bar z,\lambda)=F_-(z,\lambda)  F_+(z,\bar z,\lambda)~ \hbox{ with }~ F_+(z,\bar z,\lambda):\D \setminus \mathcal{S} \rightarrow\Lambda^{+}_{\FC} G^{\mathbb{C}}_{\sigma},\]
such that
 $ F_-(z,\lambda):\D \setminus \mathcal{S} \rightarrow\Lambda_{*}^{-} G^{\mathbb{C}}_{\sigma}
$ is meromorphic in $z $ on $ \D$ and satisfies $F_-(z_0, \lambda) = e.$ Moreover, its Maurer--Cartan form is of the form
\[\eta= F_-(z,\lambda)^{-1} \dd F_-(z,\lambda)=\lambda^{-1}\eta_{-1}(z)\dd z\]
with $\eta_{-1}(z)$ independent of $\lambda$. $\eta$ is called the {\bf normalized potential} of $\mathcal F$.

\item Conversely,  Let $\eta$ be a  $\lambda^{-1}\cdot\mathfrak{p}^{\mathbb{C}}-\hbox{valued}$ meromorphic $(1,0)-$ form with $F_-(z,\lambda)$ a solution to $F_-(z,\lambda)^{-1} \dd F_-(z,\lambda)=\eta, ~ F_-(z_0,\lambda)=e$
which is meromorphic on $\D$.  Then  there exists an open subset $\D_{\mathcal{I}}$  of $\D$  such that for all $z \in \D_{\mathcal{I}}$ we have an Iwasawa decomposition
\[F_-(z,\lambda)=\tilde{F}(z,\bar z,\lambda) \tilde{F}_+(z,\bar z,\lambda)^{-1}, \]
$\hbox{with } \tilde{F}(z,\bar z,\lambda)\in\Lambda G_{\sigma},\  \tilde{F}_+(z,\bar z,\lambda)\in\Lambda^{+}_\FC {G}^{\mathbb{C}}_{\sigma},$ $\tilde{F}(z_0,z_0,\lambda) = e$ and $ \tilde{F}_+(z_0,z_0, \lambda) = e.$
 Then $\tilde{F}(z,\bar z,\lambda) $ is an extended frame  of some harmonic map from
 $\D_{\mathcal{I}}$  to $G/K$ satisfying   $\tilde{F}(z_0,z_0,\lambda)=e$. Moreover, the two constructions above are inverse to each other, if throughout the normalizations at the base point $z_0$ are used.
\end{enumerate}

\end{theorem}

In many applications it is, for different reasons, more convenient to use potentials which have a Fourier expansion containing more than one power of $\lambda$.
As a matter of fact, when permitting many ( actually infinitely many) powers of $\lambda$,  one can even obtain holomorphic coefficients (at least in the case of a non-compact domain).

\begin{theorem}\cite{DPW}, \cite{DoWa1}.  We retain the notation introduced above.
\begin{enumerate}
\item There exists some  $V_+:\D \rightarrow  \Lambda^{+} G^{\mathbb{C}}_{\sigma} $ such that $C(z,\lambda) =
F(z,\bar z,\lambda) V_+(z,\bar z,\lambda)  $ is holomorphic in $z\in\mathbb{D}$ and in $\lambda \in \mathbb{C}^*$.
The Maurer--Cartan form $\tilde\eta = C^{-1} \dd C$ is a holomorphic $(1,0)-$form on $\D$ and  $\lambda \tilde\eta$ is holomorphic for $\lambda \in \C$.

\item Conversely, Let $\tilde\eta\in\Lambda\mathfrak{g}^{\C}_{\sigma}$ be a holomorphic  $(1,0)-$ form such that $\lambda \tilde\eta$ is holomorphic for $\lambda\in\C$, then by the same steps as in Theorem \ref{thm-DPW} we obtain a harmonic map $\mathcal{F}: \D\rightarrow G/K$.
\end{enumerate}
\end{theorem}

\begin{remark}
$C(z,\lambda)$ and $\tilde\eta$ are called a {\em holomorphic extended frame and a holomorphic potential} of the harmonic map $\mathcal{F}$ respectively. Note that $C(z,\lambda)$ and $\tilde\eta$ are not unique for a harmonic map. For instance, all holomorphic gauges of  $C(z,\lambda)$ by  some holomorphic $g_+(z, \lambda)$ satisfying $g_+(z_0, \lambda) = e$ yield new extended holomorphic frames/potentials, but these all will produce the same  surface.
\end{remark}

\subsubsection{The case of $S^2$}  In this subsection we have considered so far only contractible Riemann surfaces $\D$, i.e. the unit disk and the complex plane. We will now address  the case  of $S^2$. This case needs a bit of interpretation.  We refer to \cite{DoWa1} for a detailed discussion. We begin by recalling that in \cite{DoWa1} we have shown that every Willmore sphere can be generated from some meromorphic potential $\eta$ by
an application of the usual DPW procedure. This means, one considers a solution $F_-$ to the ode $\dd F_- = F_- \eta$ and can assume that it is meromorphic on $S^2$. One can also assume that $F_-(z_0,\lambda) =I$ at some base point $z_0$. Then one performs an Iwasawa decomposition, at least locally around $z_0$. The corresponding ``real factor'' is called ``extended frame". However, due to topological restrictions, it necessarily
 has (w.l.g. at most two) singularities on $S^2$.
This does not affect its role as ``extended frame" in the sense of the loop group theory. In particular, by projection to the symmetric target space $G/K$ of the conformally harmonic map of a Willmore immersion discussed here, also this singular extended frame induces some conformally harmonic map and as in the case of contractible simply--connected covers, this harmonic map induces some Willmore surface.
For this reason, from here on we will not distinguish in our discussion Willmore surfaces according to what  simply--connected cover they have.

\section{Willmore surfaces with orientation reversing symmetries}

 In this section, we will consider the orientation reversing symmetries $(\mu,  S)$ \eqref{sym} of a Willmore surface $y$ in terms of their loop group data. To this end, we will first discuss the behaviour of the extended frames under such symmetries. Then we will derive a formula satisfied by a normalized potential if the corresponding Willmore surface admits the orientation reversing symmetry $(\mu,S)$. Conversely, we will show that if one starts from a normalized potential which satisfies the condition just mentioned, then one will obtain a harmonic map with symmetry. Moreover,
 if this harmonic map is the conformal Gauss map of a Willmore surface, then the symmetry of the harmonic map induces a symmetry of the Willmore surface.

\subsection{Transformation formulas for  extended frames and potentials}
In Theorem \ref{th-symm-f}  we have given formulas for the frames of a Willmore surface with orientation reversing symmetry. Following the loop group approach for the description of all harmonic maps we
 now  need to consider maps depending on $z$ and on the loop parameter $\lambda$.

Introducing the loop parameter  (see Section 2.3 above or Section 4.1 of \cite{DoWa-sym1}) we can translate the results of Theorem \ref{th-symm-f} immediately
into the loop group setting ( For a map $\Phi(z,\bar{z},\lambda)$, we will write
$\Phi(\mu(z),\overline{\mu(z)},\lambda)$ to express $\Phi(z,\bar{z},\lambda) \circ \mu$.):

\begin{theorem}\label{th-symm-n-1}
Let $\D$ denote the unit disk or the complex plane and $y: \D \rightarrow S^{n+2}$ a Willmore surface. Let $z_0$ denote a base point in $\D$.
Let $(\mu , S )$ be
an orientation reversing symmetry of the harmonic conformal Gauss map $f: \D \rightarrow SO^+(1,n+3)/ SO^+(1,3)\times SO(n)$ of the Willmore surface $y$. Let $F(z,\bar{z},\lambda )$ be the extended frame of $f$ satisfying the initial condition $F(z_0,z_0,\lambda)=I$. Then:
 \begin{enumerate}
 \item There exists some ${M}(\lambda) \in \Lambda O^+(1,n+3)_{\sigma}$ and $ k (z,\bar{z})\in O^+(1,3)\times O(n)$
such that
\begin{equation} \label{transF}
F(\mu(z), \overline{\mu(z)}, \lambda) = M(\lambda) \cdot  F(z, \bar{z}, \lambda^{-1})  \cdot   k (z, \bar{z}).
\end{equation}
Here the gauge matrix $k$ is a diagonal block matrix of the form  $k = diag(k_1 , k_2)$, independent of $\lambda$, with $k_1 \in O^+(1,3)$ and
$ k_2 \in O(n)$. We always have $\det  k_1  = -1$ and $\det k_2=-\det \hat S$.
 Moreover, the matrix $\hat{S}$ in \eqref{sym}  is equal to $M(\lambda = 1)$ and the matrix $k_1$ has the explicit representation  \eqref{eq-k1}.
\item Moreover, for the associated family $f(z,\bar{z},\lambda)$, we have
\begin{equation} \label{eq-trans-f}
f(\mu(z), \overline{\mu(z)}, \lambda) = M(\lambda) f(z, \bar{z}, \lambda^{-1})^{\star}.
\end{equation}
\end{enumerate}
\end{theorem}

\begin{proof}From \eqref{eq-Frame-1} and \eqref{eq-Frame-2} we know
\begin{equation}\label{Fmu2}
F(\mu (z), \overline{ \mu (z)}) = \hat{S} F(z, \bar{z})  k (z, \bar{z}),
\end{equation}
where $k=P_2P_1\hat{k}$ if $\det\hat S=1$ and $k=P_1\hat{k}$ if $\det\hat{S}=-1$.
Note, in both cases, $ \hat{k}\in K $ and $k\in Fix_{\sigma} O^+(1,n+3)$.  Consider the Maurer--Cartan form  \[\alpha (z, \bar{z}) =  F(z, \bar{z})^{-1} \dd  F(z, \bar{z})\] and expand
\begin{equation*}
\alpha (z, \bar{z}) = \alpha' (z, \bar{z})_{\mathfrak{p}}+ \alpha (z, \bar{z})_{\mathfrak{k}} + \alpha'' (z, \bar{z})_{\mathfrak{p}}.
\end{equation*}
Since conjugation by $ k $ leaves $\mathfrak{k}$  and $\mathfrak{p}$ invariant, \eqref{Fmu2} yields
\begin{equation}\label{mustaralpha}
\mu^*\alpha' _{\mathfrak{p}} + \mu^*\alpha _{\mathfrak{k}} + \mu^*\alpha'' _{\mathfrak{p}} =  k ^{-1} \alpha' _{\mathfrak{p}}  k  +  k ^{-1} \alpha_{\mathfrak{k}}  k  +  k ^{-1}  \dd k  +
 k ^{-1} \alpha''_{\mathfrak{p}}  k .
\end{equation}
Note that $\mu^*\alpha' _{\mathfrak{p}}$ is a $(0,1)-$form and
$\mu^*\alpha'' _{\mathfrak{p}}$ is a $(1,0)-$form.
Thus \eqref{mustaralpha} is equivalent to
\begin{equation}\label{compare}
\mu^*\alpha' _{\mathfrak{p}} =  k ^{-1} \alpha'' _{\mathfrak{p}}  k ,
\hspace{2mm}
\mu^*\alpha _{\mathfrak{k}} =  k ^{-1} \alpha_{\mathfrak{k}}  k  +  k ^{-1}  \dd k , \hspace{2mm} \mbox{and} \hspace{2mm}
\mu^*\alpha'' _{\mathfrak{p}} =  k ^{-1} \alpha' _{\mathfrak{p}}  k .
\end{equation}
In view of \eqref{compare} it is now  easy to see that the Maurer--Cartan form $\alpha(z, \bar{z},\lambda)$ of the extended frame $F(z, \bar{z},\lambda)$ of the conformal Gauss map of $f$ satisfies
\begin{equation}\label{alphatrans}
\mu^* \alpha(z, \bar{z},\lambda) =  k ^{-1} \alpha(z, \bar{z}, \lambda^{-1})  k  +  k ^{-1}  \dd k .
\end{equation}
This implies for the extended frame (satisfying $F(z_0, \bar{z}_0,\lambda) = I$)
\[ F(\mu(z), \overline{\mu(z)}, \lambda) = M  (\lambda) F(z, \bar{z}, \lambda^{-1})  k (z, \bar{z}),\]
where a priory $ M (\lambda)\in \Lambda O^+(1,n+3)_{\sigma}$ and $ M (\lambda=1)=\hat{S}$.
Since $\det F=1$, we see that $\det\hat{S}=\det M  (\lambda)=\det  k (z, \bar{z})=\pm1$.
 The last statement follows from (\ref{eq-sym}).
\end{proof}

 \begin{remark}\
The fact $\det k_1=-1$ means that $k$ does not take values in $K=SO^+(1,3)\times SO(n)$.
So we can not consider the quotient by $K$ straightforwardly. In other words, as we have discussed in Section 2.2, since $\det k_1=-1$, it changes the orientation of the oriented $4-$dim Lorentzian subspace.
Moreover, since $\det \hat{S}=\det k$, we also need to keep in mind  the signature of $\det \hat{S}$. As a consequence,  in view of $\det \hat{S} = \det M(\lambda) = \det k_1 \cdot \det k_2$ we obtain the following corollary.
\end{remark}
\begin{corollary}
Set $P_1 = diag(1,1,1,-1,1...,1)\hbox{ and }P_2 = diag(1,1,1,1,-1,1...,1)$

as in Section 2.2.
\begin{enumerate}\item  If $\det \hat{S}=1$, then  we have $M(\lambda)\in \Lambda SO^+(1,n+3)_{\sigma}$ and $P_2P_1k\in K$;
\item  If $\det \hat{S}=-1$, then  we have $M(\lambda)P_1\in \Lambda SO^+(1,n+3)_{\sigma}$ and $P_1k\in K$.
\end{enumerate}
\end{corollary}

With these considerations in mind, we are finally able to determine the transformation formula for the normalized potentials of a Willmore surface under an orientation reversing symmetry.

\begin{theorem}\label{thm-sym-n-F_-}
Let $\D$ denote the unit disk or the complex plane and $y: \D \rightarrow S^{n+2}$ a Willmore surface. Let $z_0$ denote a base point in $\D$.
Let $(\mu , S )$ be
an orientation reversing symmetry of the harmonic Gauss map $f: \D \rightarrow SO^+(1,n+3)/ SO^+(1,3)\times SO(n)$ of the Willmore surface $y$.
Let $F(z,\bar{z},\lambda )$ be the extended frame of $f$ satisfying the initial condition $F(z_0,\bar{z}_0,\lambda)=I$ and
assume that
\[F(z,\bar{z},\lambda )=F_-(z,\lambda )\cdot F_+(z,\bar{z},\lambda ).\]

Then\begin{equation}\label{eq-mu-F_-}
F_-(\mu(z), \lambda) =
 M (\lambda)   \overline{F_-(z,\lambda^{-1})} \cdot W_+(\bar{z},\lambda),
\end{equation}
where
\begin{equation}
W_+( \bar{z}, \lambda) =  \overline{F_+(z,\bar{z}, \lambda^{-1})}  \cdot  k (z, \bar{z}) \cdot (F_+(\mu(z), \overline{\mu(z)},\lambda))^{-1}.
\end{equation}
For the normalized potential $\eta = F_-^{-1} \dd F_-$  of $f$, we obtain from \eqref{eq-mu-F_-}
\begin{equation}\label{eq-mu-eta}
\mu^*\eta(z,\lambda) =  \overline{\eta(z, \lambda^{-1})} \sharp W_+,
\end{equation}
where $``\sharp W_+$'' means  gauging by $W_+$.
Moreover,
\begin{enumerate}\item  if $\det \hat{S}=1$, then $W_+$ takes value in  $ \Lambda^+ SO^+(1,n+3,\C)_{\sigma} $;
\item  if $\det \hat{S}=-1$, then $P_2W_+$ takes value in  $ \Lambda^+ SO^+(1,n+3,\C)_{\sigma}.$
\end{enumerate}
\end{theorem}

\begin{proof}
Since the extended frame $F$ is real, it satisfies
\begin{equation*}
F(z, \bar{z}, \lambda) = \overline{F(z, \bar{z},\lambda)}
\end{equation*}
for all $z\in\D$ and all $\lambda\in S^1$. Then, in view of (\ref{transF}),  we have
\[F(\mu(z), \overline{\mu(z)},\lambda) = M (\lambda) \cdot  F(z, \bar{z}, \lambda^{-1})  \cdot  k (z, \bar{z}) = M (\lambda) \cdot  \overline{F(z, \bar{z}, \lambda^{-1})}  \cdot  k (z, \bar{z}).\]
Therefore, considering the usual Birkhoff splitting (locally near $z=0$)
\[F = F_- F_+, ~\hbox{ with }~~ F_- = I + \mathcal{O}(\lambda^{-1}),\] we obtain
\[F_-(\mu(z), \lambda)\cdot F_+(\mu(z), \overline{\mu(z)},\lambda)=
 M (\lambda)   \overline{F_-(z,\lambda^{-1})}\cdot\overline{F_+(z, \bar{z},\lambda^{-1}) } k (z, \bar{z}).\]
The formulas \eqref{eq-mu-F_-} and \eqref{eq-mu-eta} now follow directly.
\end{proof}

\subsection{From potentials to surfaces}
The converse of the above theorem is

\begin{theorem}\label{thm-sym-F-mu}
Let $\D$ denote the unit disk or the complex plane. Let $\eta$ be a potential for some harmonic map $f:\D\rightarrow SO^+(1,n+3)/ SO^+(1,3)\times SO(n)$,
which is the  oriented  conformal Gauss map of a Willmore surface $y$.
  Let $\mu$ be an anti-holomorphic automorphism of $\D$
and assume that equation \eqref{eq-mu-eta}
holds for some $W_+$ taking values in $  (\Lambda ^+SO^+(1,n+3,\C)_{\sigma})^{\circ}$ $($or $P_2\cdot   (\Lambda ^+SO^+(1,n+3,\C)_{\sigma})^{\circ} )$.
Then  there exists some $ M (\lambda ) \in
(\Lambda SO^+(1,n+3,\C)_{\sigma})^{\circ}$ $($or $ M (\lambda ) \in
P_2\cdot(\Lambda SO^+(1,n+3,\C)_{\sigma})^{\circ} )$ such that for the solution $C$ to
$\dd C = C \eta,\ ~C (z=0,\lambda ) = I$ the following equation is satisfied
\begin{equation}\label{symmfor-C-mu}
 C (\mu(z),\lambda) = M(\lambda) \overline{C(z,\lambda^{-1})} \cdot W_+.
\end{equation}
  Moreover, $\mu$ induces a symmetry of the harmonic map $f$ associated with $\eta$ if and only if $W_+$ and $M$ can be chosen such that
 $M(\lambda) \in (\Lambda SO^+(1,n+3)_{\sigma})^{\circ}$ $($or $ M (\lambda ) \in
P_2\cdot(\Lambda SO^+(1,n+3,\C)_{\sigma})^{\circ} )$ and \eqref{symmfor-C-mu} holds.
In this case $\mu$ induces the symmetry
\begin{equation}\label{symmfor-f-mu}
f(\mu(z), \overline{\mu(z)}, \lambda) = M(\lambda) f(z, \bar{z}, \lambda^{-1})^{\star}.
\end{equation}
of the harmonic map $f$ induced by $\eta$. For the Willmore surface $y$ obtained from $f$ we have furthermore
\begin{equation}\label{eq-y-sym1}
 y(\mu(z), \overline{\mu(z)},\lambda) =   [ M(\lambda) Y(z,\bar{z},\lambda^{-1})].
\end{equation}
\end{theorem}

\begin{proof}
Integrating $\dd C = C \eta$ with $C(z=0,\lambda ) = I$ and using \eqref{eq-mu-eta} one obtains \eqref{symmfor-C-mu}. Consider the Iwasawa decomposition $C=F\cdot V_+$, we have
\[C(\mu(z),\lambda)=F(\mu(z), \overline{\mu(z)},\lambda)\cdot F_+(\mu(z), \overline{\mu(z)},\lambda)=M (\lambda)  \overline{F(z,\bar{z},\lambda^{-1})\cdot F_+(z,\bar{z},\lambda^{-1})}   \cdot W_+.\]
From this we derive
\[F(\mu(z), \overline{\mu(z)},\lambda)=M (\lambda)  \overline{F(z,\bar{z},\lambda^{-1})}\cdot k (z,\bar{z})\]
for some $ k (z,\bar{z})= \overline{F_+(z,\bar{z},\lambda^{-1})} W_+ (F_+(\mu(z), \overline{\mu(z)},\lambda))^{-1}\in\Lambda O^+(1,n+3)_{\sigma}\cap \Lambda^+ O^+(1,n+3,\C)_{\sigma}= O^+(1,3)\times O(n)$. Therefore \eqref{symmfor-f-mu} follows since $\mu$ reverses the orientation of $\D$. The rest of the proof follows from the unique correspondence of Willmore surfaces and their oriented conformal Gauss maps \cite{DoWa1}.
\end{proof}

From the proofs of the theorems above we  obtain the following useful result:
\begin{corollary}\label{cor-non--ori} Retaining the assumptions and the notation of the theorems above we obtain
\begin{enumerate}\item   $f$ has the symmetry \eqref{symmfor-f-mu} for some  $M(\lambda)\in (\Lambda SO^+(1,n+3)_{\sigma})^o$, if and only if $C$ satisfies
\begin{equation}\label{eq-mu-CF}
 (\overline{C(z,\lambda^{-1})})^{-1}\cdot  M(\lambda)^{-1} \cdot C(\mu(z),\lambda)\in \Lambda^+ SO^+(1,n+3,\C)_{\sigma}.
\end{equation}
\item   $f$ has the symmetry \eqref{symmfor-f-mu} for some  $M(\lambda)\in P_2(\Lambda SO^+(1,n+3)_{\sigma})^o$, if and only if $C$ satisfies
\begin{equation}\label{eq-mu-CF2}
 (\overline{C(z,\lambda^{-1})})^{-1}\cdot  M(\lambda)^{-1} \cdot C(\mu(z),\lambda)\in P_2\Lambda^+ SO^+(1,n+3,\C)_{\sigma}.
\end{equation}
\end{enumerate}
\end{corollary}
\begin{proof}  From \eqref{symmfor-C-mu}, we see that $f$ has the symmetry \eqref{symmfor-f-mu} if and only if $C$ satisfies
\[(\overline{C(z,\lambda^{-1})})^{-1}\cdot M(\lambda)^{-1} \cdot\mu^*C(z,\lambda)=W_+.\]
Then \eqref{eq-mu-CF} and \eqref{eq-mu-CF2} follow.
\end{proof}

  \begin{remark}\
\begin{enumerate}
\item  Although the above formulas seem somewhat complicated, they actually do have many useful applications which will be shown in the following sections. A particularly  interesting example is the description of isotropic Willmore immersions from $\R P^2$ in $S^4$ which we will discuss in Section 5.
\item As we have pointed out for several times, for the $S^2$ case, the idea is to to proceed as usual on the two charts $\D=S^2\setminus\{z_0\}$ with $z_0=0$ or $z_0=\infty$ (See Section 4 of  \cite{DoWa1}). Then the loop group theory works well on $\D$ and the equations concerning symmetries discussed above hold on $\D\setminus\{\mu(z_0)\}$.
\end{enumerate}
\end{remark}

\section{Non--orientable Willmore surfaces as quotients of orientable Willmore surfaces with symmetries}

This section aims to apply the results above to the discussion of non--orientable Willmore surfaces. For this purpose, we will first recall some well--known facts about anti--holomorphic automorphisms of orientable surfaces. Then, we apply Theorem \ref{thm-sym-F-mu} of Section 3 to non--orientable Willmore surfaces by  viewing them as quotients of orientable Willmore surfaces by  orientation reversing
involutions $(\mu, I)$.

\subsection{Non-orientable surfaces with universal cover $S^2$}

For non-orientable surfaces with universal cover $S^2$, we have  the following well-known result (See e.g. \cite{Eujalance} for a reference)
\begin{theorem} \label{RP2}
Let $M$ be a non--orientable surface  obtained from its universal cover $\tilde{M}$
by an anti-holomorphic map $\mu$ of finite order which generates a freely acting group.  Then $\tilde M= S^2$,
 $M= \R P ^2$ and w.l.g. $\mu(z) = -\frac{1}{\bar{z}}$.
 In particular, $\mu$ has order two and no fixed points.
Moreover, $\R P^2$ is the only non-orientable surface with universal cover $S^2.$
\end{theorem}


\subsection{Non--orientable Willmore surfaces}

In this subsection we will discuss how one can construct Willmore immersions from non--orientable surfaces to $S^{n+2}$.
Let $\check{M}$ be a (not necessarily simply-connected) Riemann surface with an anti-holomorphic automorphism $\mu:\check{M}\rightarrow \check{M}$ of order $2$ without any fixed points. Then $M=\check{M}/\{p\sim\mu(p)\}$ is a non--orientable surface with the natural $2 :1$ covering $\pi:\check{M}\rightarrow M$, $p\rightarrow \{p\sim \mu(p)\}$. Moreover, all non--orientable surfaces can be obtained this way.

\subsubsection{Lifting and descending Willmore surfaces}

Let $M$ and $\check{M}$ be as above and let $y: M \rightarrow S^{n+2}$ be a  Willmore surface (with or without branch points).  Now let $\check{y}: \check{M} \rightarrow S^{n+2}$ be  the natural  lift of $y$ satisfying $\check{y}(p)=y(\pi(p))$ for any $p\in\check{M}$. Let $\check{f}:\check{M}\rightarrow S^{n+2}$ denote the conformal Gauss map of $\check{y}$.
Then we obtain \[\check{f}\circ\mu=\check{f}^{\star},\] since $\mu$ reverses the orientation (see also Theorem \ref{th-symm-f}).
As a consequence, the Willmore immersion $\check{y}$ and its conformal  Gauss map $\check{f}$   inherit a natural symmetry
\begin{equation}\label{eq-non--ori-sym}
\check{y}(\mu(p))=\check{y}(p),\hspace{5mm}  \check{f}(\mu(p))=\check{f}(p)^{\star}.
\end{equation}
 Conversely, any Willmore immersion $\check{y}: \check{M} \rightarrow S^{n+2}$
having such a pair of symmetries  can be factored through the non--orientable Willmore surface $M$.
For later purposes we would like to point out that the anti-holomorphic transformation $\mu$
yields,  in our notation,  the special form symmetry  $(\mu, I)$.  Therefore,  when applying results of the previous sections, we will thus have the case, where $\det \hat{S} = 1.$

Since we are interested in the construction of   Willmore surfaces (with or without  branch points,  from non--orientable surfaces to $S^{n+2}$, we outline below in some detail how this can be achieved in the loop group formalism.
Note that from what was said above, in the case $\check{M} = S^2$
we have $\check{M} = \tilde{M} = S^2$ and only need to consider one specific $\mu$. We will consider this case in the next section in more detail separately.

 Recall, if we  consider the Riemann surface $\check{M}$ and a Willmore surface
$\check{y}: \check{M} \rightarrow S^{n+2}$, then in Section 2, we have constructed the conformal Gauss map $\check{f},$ an extended frame $\check{F}$, a holomorphic/meromorphic
frame $\check{C}$ and the potential $\check{\eta} = \check C^{-1} \dd \check C$.
We also have normalized $\check{F}$ and $\check{C}$ so that they attain the value $I$ at some base point. In \cite{DoWa-sym1} we have stated the transformation behavior of $\check{f}$, $\check{F}$, $\check{C}$ and $\check{\eta}$ under the action of $\Gamma = \pi_1 (\check{M})$
on the universal cover $\tilde{M}$ of $\check{M}$.

Assume now that $\mu$ is an anti--holomorphic fixed point free involution of $\check M$ satisfying \eqref{eq-non--ori-sym}.
Since $(\mu, I)$ acts as a symmetry of $\check y$ on $\check{M}$,
 its action on $\tilde{M}$
induces transformation rules for the lifts $\tilde y$, $\tilde{f}$, $\tilde{F}$, $\tilde{C}$ and $\tilde{\eta}$. One should note that while the action of $\tilde{\mu}$ on
$\tilde{y}$ is actually trivial, its action on the associated family of
$\tilde{M}$ is in general non-trivial.

 Using what was just recalled,  one can construct  now what we want in two steps:
\subsubsection{ Construction procedure of Willmore surfaces from non-orientable surfaces to $S^{n+2}$}

 In the first step we consider $\Gamma_0 \cong\pi_1 (\check{M})$,  as subgroup of the  group of holomorphic  automorphisms  $\tilde M$.  Then $\pi_1 (M) =
\Gamma \cup \mu \Gamma$, where $\mu$ can be considered as a fixed point free anti-holomorphic transformation on $\tilde{M}$ satisfying $\mu^2 \in \Gamma_0$. Let's consider a potential which generates a Willmore immersion $\check{y}: \check{M} \rightarrow S^{n+2}$.
We have seen in \cite{DoWa-sym1} that for the construction of a Willmore immersion $\check{y}: \check{M} \rightarrow S^{n+2}$ we could start from  an invariant potential $\eta$ on $\tilde{M}$. Then the solution $C$ to $\dd C = C \eta$
with $C(0,\lambda) = I$ satisfies $g^*C = \chi(g, \lambda) C$ for all $g \in \Gamma$. We need to make sure that  $\chi (g,\lambda) \in \Lambda SO^+(1, n+3)_\sigma^{\circ}. $ Moreover we need to make sure that $\chi (g ,\lambda = 1) = I$ for all $g \in \Gamma_0$.
Then an Iwasawa splitting $C = F F_+$ with $F(0,\lambda) = I$
yields a conformally harmonic map $\check{f} : \check{M} \rightarrow S^{n+2}$
by putting $\check{f} \equiv F \mod K$.

 To actually obtain a  family $\check{y}_\lambda$ of (possibly) branched surfaces
from $\check{M}$ to $S^{n+2}$ we finally need to make sure, as discussed in Theorem 3.11 of \cite{DoWa1}, that  $\check{f}$ can be realized as the conformal Gauss map of
$\check{y}_\lambda$.
If all this works, then
so far we have constructed  the Willmore surface
$\check{y}: \check{M} \rightarrow S^{n+2}$.
This finishes step 1.

In step 2 we incorporate the action of $\mu$.  In general this works as follows:
From Section 3 we know that the potential $\eta$ needs to satisfy the relation
\begin{equation}\label{eq-non--ori-potential}
\mu^* \eta(z,\lambda) =  \overline{\eta(z, \lambda^{-1})}  \sharp \widetilde{W}_+,
\end{equation}
for  $\widetilde{W}_+\in\Lambda^+SO^+(1,n+3,\C)_{\sigma}$, where $``\sharp \widetilde{W}_+$'' means  gauging by $\tilde{W}_+$.
If this is satisfied, then one obtains
\begin{equation} \label{eq-non--ori-C}
C(\mu(z),\lambda) = \chi ( \lambda) \overline{C(z,\lambda^{-1})}   \widetilde{W}_+(\bar{z},\lambda)
\end{equation}
and also
\begin{equation} \label{eq-non--ori-F}
F(\mu(z), \overline{\mu(z)},\lambda) =  \chi (\lambda) F(z,\bar z,\lambda^{-1})  \cdot k(z, \bar{z}).
\end{equation}
Note that we use the notation $\chi(\lambda)$ for the monodromy of $\mu$ and write $\chi(g,\lambda)$ for the monodromy of $g \in \Gamma_0.$
In order to obtain a (possibly branched) Willmore immersion of $M=\check{M}/\{p\sim\mu(p)\}$, two more conditions need to be satisfied: on the one hand
we need to require
$ \chi (  \lambda) \in \Lambda SO^+(1, n+3)_\sigma^{\circ} $
and on the other hand we need  $\chi(\lambda)_{ \lambda = 1} = I$.
 Altogether we obtain this way that $(\mu,\chi)$ is an orientation reversing  symmetry
of $\check{y}_\lambda$ which  is the symmetry $(\mu, I)$ for $\lambda = 1$ and therefore produces a Willmore surface $y: M \rightarrow S^{n+2}.$

Altogether we obtain as an application of Theorem \ref{thm-sym-F-mu}

\begin{theorem}\label{thm-sym-n-ori} We retain the notation introduced above.
\
\begin{enumerate}
\item If $\check{M}\neq S^2$, we start from some $\pi_1(\check{M})$ invariant   potential $\eta$ satisfying
\eqref{eq-non--ori-potential} for some $\widetilde{W}_+$. Then the solution to the ode
\[\dd C = C \eta,\ C(0,\lambda) = I\] satisfies \eqref{eq-non--ori-C}.
Let $(\mu,\chi)$ be a symmetry as above. Assume that  $ {\chi(  \lambda)} \in \Lambda SO^+(1,n+3)_{\sigma}$ for all $\lambda \in S^1$ and
$ {\chi(  \lambda)}|_{\lambda=1}=I$.
Then we obtain a harmonic map $\check{f}|_{\lambda=1}$ defined on $\check{M}$. Moreover, the corresponding frame $\check{F}$, obtained from $C$ by the unique Iwasawa splitting, also satisfies \eqref{eq-non--ori-F}.
From this we obtain
\begin{equation}\label{eq-fcheck}
\check{f}(\mu(z), \overline{\mu(z)},\lambda)
=  {\chi (  \lambda)}\check{f}(z,\bar{z}, \lambda^{-1})^{\star}
\end{equation}
and, if $\check{f}$ is the oriented conformal Gauss map of  a  Willmore immersion $\check{y}$, then we also have
\begin{equation}\label{eq-ycheck}
\check{y}(\mu(z), \overline{\mu(z)},\lambda) = \mu{ \chi(\mu,  \lambda)}\check{y}(z,\bar{z},
\lambda^{-1})=[ {\chi(  \lambda)}\check{Y}(z,\bar{z},\lambda^{-1})].
\end{equation}
All these together show that $\mu$ leaves  $\check{y}(z,\bar{z},\lambda = 1)$ invariant. Therefore  one obtains a non--orientable Willmore surface $y=\check{y}(p)=\check{y}(\mu(p))$ on $M=\check{M}/\{p\sim\pi(p)\}$.
\item If $\check{M}=S^2$, we start from some normalized potential $\eta$ satisfying
\eqref{eq-non--ori-potential} defined on $\C\subset S^2$ for some $\widetilde{W}_+$. Note that in this case we have $\mu(0)=\infty$. Then the rest is the same as in $(1)$,  except that the results hold on $\C\setminus\{0\}$. Setting $\lambda=1$ and taking the limit $z\rightarrow0$ for \eqref{eq-fcheck} and \eqref{eq-ycheck}, we also obtain the definition of $\check{f}|_{\lambda=1}$ and $\check{y}|_{\lambda=1}$ at  $\infty=S^2\backslash\C$.
\end{enumerate}
\end{theorem}
\begin{proof}
The only thing left to show is $(2)$. As we have discussed in Section \ref{section-s2}, the extended frame is well-defined on $\C$ and the potential is meromorphically defined on $S^2$ (See \cite{DoWa1} more details for the potentials on $S^2$). And all the results on the symmetry of $\mu$ can be discussed
 on $\C\setminus\{0\}$ without any further changes. Then $(2)$ follows.
\end{proof}
In the above theorem we start from a potential satisfying \eqref{eq-non--ori-potential}. This involves the matrix $\widetilde{W}_+$ and hence  it is a complicated condition that the potential needs to satisfy. In the case of orientable Willmore surfaces (compact or non-compact), we can show that one can avoid a term like  $\widetilde{W}_+$ by the right choice of potential.  Unfortunately, for non--orientable Willmore surfaces this is not the case.
We present this result only for $S^2$, but expect that such a result holds more generally.

\begin{theorem}
Let $y:S^2\rightarrow S^{n+2}$ be a Willmore immersion satisfying $y\circ\mu=y$ for $\mu(z)=-\frac{1}{\bar{z}}$.  Then there exists no potential $\eta$ for $y$ such that
$\mu^*\eta=P\overline{\eta(z,\lambda^{-1})}P^{-1}$ holds for some $P\in SO^+(1,3)\times SO(n)$.
\end{theorem}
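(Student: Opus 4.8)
Suppose, for contradiction, that a (normalized) potential $\eta$ for $y$ satisfies $\mu^{*}\eta=P\overline{\eta(z,\lambda^{-1})}P^{-1}$ with $P\in SO^{+}(1,3)\times SO(n)$. Since $y$ actually descends to $\R P^{2}$, the induced symmetry of its conformal Gauss map $f$ is $(\mu,\mathrm{id})$, i.e. $\mu^{*}f=f^{\sharp}$. The hypothesis is exactly relation \eqref{eq-mu-eta} with a \emph{constant} factor $W_{+}\in O^{+}(1,3)\times O(n)$, so Theorem \ref{thm-sym-F-mu} applies: integrating $dC=C\eta$, $C(0,\lambda)=I$, and absorbing the constant factors into one loop $\mathcal M$, relation \eqref{symmfor-C-mu} becomes
$$\mu^{*}C(z,\lambda)=\mathcal M(\lambda)\,\overline{C(z,\lambda^{-1})}\,P^{-1},\qquad \mathcal M\in\Lambda SO^{+}(1,n+3)_{\sigma},$$
and comparing \eqref{symmfor-f-mu} at $\lambda=1$ with $\mu^{*}f=f^{\sharp}$ (using that $y$ is a full immersion) forces $\mathcal M(1)=I$. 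This is the analogue, for $\mu$, of the recipe of Section 2.2; the point is that $\mu(z)=-\tfrac{1}{\bar z}$ has \emph{no} fixed point on $S^{2}$, so one cannot normalize $C$ at a fixed point of $\mu$ to kill $\mathcal M$ outright.

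The key step is to exploit that, although $\mu$ is fixed point free, it satisfies $\mu(0)=\infty$, and -- this is where $S^{2}$ is essential, there being no analogue for $\D=\C$ or the disk -- the meromorphic frame $C=F_{-}$ extends meromorphically to all of $S^{2}$. Evaluating the displayed identity at $z=0$ gives $C(\infty,\lambda)=\mathcal M(\lambda)P^{-1}$; in particular $C$ is holomorphic at $\infty$ and $\mathcal M(\lambda)=C(\infty,\lambda)P$. Now apply $\mu^{*}$ once more and use $\mu^{2}=\mathrm{id}$. Since $\mu$ commutes with complex conjugation ($\mu(\bar z)=\overline{\mu(z)}$), iterating the identity and collecting the $z$-independent factors yields $\mathcal M(\lambda)\,\overline{\mathcal M(\lambda^{-1})}=P^{2}$ (together with $C(z,\lambda)P^{2}C(z,\lambda)^{-1}=P^{2}$ for all $z$, automatic at $z=0$). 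Because $\mathcal M$ lies in the real loop group, $\overline{\mathcal M(\lambda^{-1})}=\mathcal M(\lambda)$ on $S^{1}$, so $\mathcal M(\lambda)^{2}=P^{2}$; putting $\lambda=1$ gives $P^{2}=\mathcal M(1)^{2}=I$, hence $\mathcal M(\lambda)^{2}=I$ for every $\lambda$. Thus $\lambda\mapsto\mathcal M(\lambda)$ is a real-analytic path of involutions in $SO^{+}(1,n+3)$ with $\mathcal M(1)=I$; as the identity is isolated among involutions, $\mathcal M\equiv I$, and therefore $C(\infty,\lambda)=P^{-1}=P$ is a \emph{constant} loop.

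Finally I would contradict this from the shape of the normalized potential $\eta=\lambda^{-1}(\cdots)\,dz$ of \eqref{rot-n-B_1}: its Birkhoff factor is $C=F_{-}=I+\mathcal O(\lambda^{-1})$ with $\lambda^{0}$-coefficient identically $I$ (it solves $dC_{0}=0$, $C_{0}(0)=I$). Hence $C(\infty,\lambda)=I+\mathcal O(\lambda^{-1})$, which can equal the constant $P\in SO^{+}(1,3)\times SO(n)$ only if $P=I$ and every negative Fourier coefficient of $C$ vanishes at $\infty$. But then the displayed identity reads $\mu^{*}C(z,\lambda)=\overline{C(z,\lambda^{-1})}$ with $C(0,\lambda)=C(\infty,\lambda)=I$; comparing $\lambda^{-k}$-coefficients yields, for every $k$, $C_{-k}\big(-\tfrac{1}{\bar z}\big)=\overline{C_{-k}(z)}$ with $C_{-k}$ meromorphic on $S^{2}$ and $C_{-k}(0)=C_{-k}(\infty)=0$; exploiting these relations one is forced to $C\equiv I$, i.e. $\eta\equiv 0$ and $y$ constant, contradicting that $y$ is a nonconstant immersion. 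I expect this last rigidity step to be the real obstacle: turning ``$C(\infty,\lambda)$ is a constant loop'' into an outright contradiction requires a careful analysis of the $\lambda$-expansion of $F_{-}$ near $\infty$ in terms of the nonzero rational potential, together with keeping honest track throughout of the holomorphy versus anti-holomorphy types produced by the anti-holomorphic $\mu$ and by complex conjugation.
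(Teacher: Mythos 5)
Your approach diverges substantially from the paper's, and it contains a genuine gap at a decisive step, so the argument does not close.

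The problematic step is the claim that ``since $\mathcal M$ lies in the real loop group, $\overline{\mathcal M(\lambda^{-1})}=\mathcal M(\lambda)$ on $S^1$.'' The reality condition for $\Lambda SO^+(1,n+3)_\sigma$ gives $\overline{\mathcal M(\lambda)}=\mathcal M(\lambda)$ for $\lambda\in S^1$ (entrywise realness on the circle), so $\overline{\mathcal M(\lambda^{-1})}=\mathcal M(\lambda^{-1})$. There is no reason for $\mathcal M(\lambda^{-1})$ to equal $\mathcal M(\lambda)$: writing $\mathcal M(\lambda)=\sum_k \lambda^k A_k$, realness forces $A_{-k}=\overline{A_k}$, but $\mathcal M(\lambda^{-1})=\mathcal M(\lambda)$ would require $A_k=A_{-k}$, which is an \emph{extra} symmetry not granted by the hypotheses. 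Consequently, the identity you correctly derive from $\mu^2=\mathrm{id}$, namely $\mathcal M(\lambda)\overline{\mathcal M(\lambda^{-1})}=P^2$, becomes $\mathcal M(\lambda)\mathcal M(\lambda^{-1})=P^2$, and this does \emph{not} reduce to $\mathcal M(\lambda)^2=P^2$. Everything downstream (involutivity, $\mathcal M\equiv I$, $P=I$, constancy of $C(\infty,\cdot)$) therefore does not follow. You also flag, honestly, that your final rigidity step is not completed; so even if the monodromy step were fixed, the argument would remain open.

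For comparison, the paper avoids the monodromy bootstrap entirely. Starting from $\mu^*C = \chi(\lambda)\,\overline{C(z,\lambda^{-1})}$ it compares $C$ to the normalized meromorphic frame $C_-$ via $C=C_-h_+$ and packages the discrepancy into one loop
$W_+(\bar z,\lambda)=\overline{h_+(z,\lambda^{-1})}\,(\mu^*h_+(z,\lambda))^{-1}$. The crux is to observe that $W_+$ admits \emph{two} different Birkhoff decompositions with respect to the $\bar z$-variable: one coming from $A_+$ and $h_+(-1/\bar z,\cdot)$ (both finite at $z=0$, so $W_+$ is in the big cell in $\bar z$), and another coming from $(\chi(\lambda)\overline{C_-(z,\lambda^{-1})})^{-1}$ and $C_-(-1/\bar z,\lambda)$. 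Passing to the limit $\lambda\to 0$ is incompatible with both decompositions simultaneously: in the big cell the factors must converge separately, but $C_-(z,\lambda)$ blows up as $\lambda\to0$ because the normalized potential is genuinely of order $\lambda^{-1}$; off the big cell one would get a nontrivial Weyl factor $\delta$, contradicting the limit of the first decomposition. This produces the desired contradiction without ever trying to pin down $\chi(\lambda)$ itself, and it sidesteps exactly the $\lambda\leftrightarrow\lambda^{-1}$ subtlety that trips up your argument. If you wish to salvage your route, you would need a genuine reason for $\mathcal M(\lambda^{-1})=\mathcal M(\lambda)$, or a different way to exploit $\mathcal M(\lambda)\mathcal M(\lambda^{-1})=P^2$, and then complete the Fourier-coefficient rigidity at $\infty$; the paper's $\lambda\to 0$ Birkhoff argument is a cleaner and more robust way to reach the conclusion.
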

\begin{proof} We can assume that the extended frame of $y$ and the solution $C$ to $\dd C=C\eta$ all attain the value $I$ at the base point $z_0=0$.
Let $\eta_-$ denote the normalized potential associated with $y$ and $z_0=0$. then the meromorphic extended frame $C_-$ associated with $\eta_-$ and $C$ are in the relation
$$C=C_-\cdot h_+,\ \hbox{ with } h_+: S^2\rightarrow \Lambda^+G^{\C}_{\sigma}.$$
Then we obtain
\[C(\mu(z),\lambda)=\chi(\lambda)\overline{C(z,\lambda^{-1})}
=\chi(\lambda)\overline{C_-(z,\lambda^{-1})}W_+(\bar{z},\lambda) h_+(\mu(z),\lambda),\]
 with
 \[W_+(\bar{z},\lambda)=\overline{h_+(z,\lambda^{-1})}\cdot h_+(\mu(z),\lambda)^{-1}\]
 on $S^2\setminus\{0,\infty\}$.
This can be rewritten in the form
\begin{equation}\label{eq-W+-Bir-Decom}
    W_+(\bar{z},\lambda)=A_+(\bar{z},\lambda) h_+(-\frac{1}{\bar{z}},\lambda).
\end{equation}
Note, by our assumptions $C$ and $C_-$ are finite at $z_0=0$, whence also $h_+$ and $A_+$ are finite at $z_0=0$. On the other hand, $W_+(\bar{z},\lambda)$ is not necessarily finite at $z_0=0$.
We observe that \eqref{eq-W+-Bir-Decom} represents a Birkhoff decomposition of $W_+(\bar{z},\lambda)$, considered as a function of $\bar{z}$ with parameter $\lambda$. In particular, $W_+$ is in the big Birkhoff cell relative to $\bar{z}$. Since $W_+$, $A_+$ and $h_+$ all permit the continuous limit $\lambda\rightarrow0$,
\begin{equation}\label{eq-W+-Bir-Decom-2}
    W_0(\bar{z})=A_0(\bar{z}) h_0(-\frac{1}{\bar{z}}).
\end{equation}
On the other hand, the equation
\begin{equation}\label{eq-C_--W_+-1}
    C_-(\mu(z),\lambda)=\chi(\lambda) \overline{C_-(z,\lambda^{-1})}W_+(\bar{z},\lambda).
\end{equation}
can be read as
\begin{equation}\label{eq-C_--W_+-2}
  W_+(\bar{z},\lambda)=\left(\chi(\lambda)\overline{C_-(z,\lambda^{-1})}\right)^{-1} C_-(-\frac{1}{\bar{z}},\lambda).
\end{equation}
Obviously, also this product represents a Birkhoff decomposition of $W_+$ considered as a function of
$\bar{z}$ with parameter $\lambda$. Since the limit $\lambda\rightarrow 0$ exists for $W_+$ we have two possibilities:

\begin{enumerate}\item The limit stays in the big Birkhoff cell relative to $\bar{z}$. In this case the factors need to converge separately. But $C_-$ has no limit when $\lambda\rightarrow0$ since $C_-(\lambda)$ is at least a polynomial in $\lambda^{-1}$ and non-constant.

 \item The limit does not stay in the big Birkhoff cell relative to $\bar{z}$. Then $W_0=\mathcal{U}^+\delta \mathcal{V}^{-}$ with $\mathcal{U}^+$ anti-holomorphic at $z=0$, $\mathcal{V}^{-}$ anti-holomorphic at $z=\infty$ and $\delta=\delta(z)\neq I$ (a Weyl group element). This contradicts  \eqref{eq-W+-Bir-Decom-2} and proves the claim.
     \end{enumerate}
\end{proof}

\begin{remark}\
From the above it is clear that the construction of non--orientable Willmore surfaces is more complicated than the construction of orientable ones. Indeed, we have found only a few papers dealing with this issue.
 In the next section  we will give some examples for Willmore surfaces of type $\R P^2$.
\end{remark}

\section{Isotropic Willmore immersions from ${\R P}^2$ into $S^{4}$}

With the notation  of Section 4.1, let  us now consider the case $ \tilde{M} = \check{M}  = S^2$. We still assume that the group of deck transformations corresponding to
$\pi_1 (M)$ contains an anti-holomorphic transformation which, since it is fixed point free, is automatically of order 2. By Theorem \ref{RP2} we know that this means that we consider
the case $ \tilde{M} = \check{M}  = S^2$ and $M = \R P ^2$, and that we only need to deal with the anti-holomorphic automorphism
\begin{equation}\label{eq-s2-mu}
     \mu:\tilde{M} = S^2\rightarrow S^2,\ \mu(z)=-\frac{1}{\bar{z}}.
\end{equation}

 To illustrate the theory presented in Section 4.1  and to find new examples of (possibly branched) Willmore  immersions of $\R P^2$ into $S^{n+2}$, we will consider first all isotropic Willmore two spheres in $S^4$  and then we will single out all those among them which descend to  Willmore surfaces from $\R P^2$ to $S^4$. We choose these surfaces mainly due to  two reasons. Firstly, it is easy to write down all normalized potentials for isotropic Willmore surfaces in $S^4$, see below or \cite{DoWa1}. Secondly, although since the classical work of Bryant \cite{Bryant1982} there have been many descriptions of isotropic surfaces in $S^4$,   examples of non--orientable Willmore surfaces are still rare. So far, in the literature we only find discussions on this topic in \cite{Ejiri-equ} and \cite{Ishihara}.  For instance, there are very few explicit examples of minimal  $\R P^2$ into $S^4$ except {the Veronese} surface. So it is worthwhile for an illustration of our work to consider isotropic Willmore surfaces in $S^4$.

Let $y:  S^2\rightarrow S^4$ be an isotropic Willmore surface.  Then its normalized potential has the form \cite{DoWa1}
\[\eta=\lambda^{-1}\left(
                     \begin{array}{cc}
                       0 & \hat{B}_1\\
                      -\hat{B_1}^tI_{1,3} & 0 \\
                     \end{array}
                   \right)\dd z,\]
with
\begin{equation}\label{eq-b1}
\hat{B}_1=\frac{1}{2}\left(
                     \begin{array}{cccc}
                      i(f_3'-f_2')&  -(f_3'-f_2') \\
                      i(f_3'+f_2')&  -(f_3'+f_2')   \\
                      f_4'-f_1' & i(f_4'-f_1')   \\
                      i(f_4'+f_1') & -(f_4'+f_1')  \\
                     \end{array}
                   \right),\ \ \ f_1'f_4'+f_2'f_3'=0.
                   \end{equation}
 Here the functions $f_j,$ $1\leq j\leq4$, are meromorphic functions on $S^2$.
\begin{theorem}\label{thm-rp2} Let $y:  S^2\rightarrow S^4$ be an isotropic Willmore surface with its normalized potential as above.
\begin{enumerate}
\item Assume that $y$ has the symmetry
$y\circ\mu=y$. Then the following equations hold
 \begin{equation}\label{eq-iso-rp2}
    \left\{
    \begin{split}
&f_1(z)+(f_1(z)f_4(z)+f_2(z)f_3(z))\overline{f_4(\mu(z))}=0,\\
&f_2(z)+(f_1(z)f_4(z)+f_2(z)f_3(z))\overline{f_2(\mu(z))}=0,\\
&f_3(z)+(f_1(z)f_4(z)+f_2(z)f_3(z))\overline{f_3(\mu(z))}=0,\\
&f_4(z)+(f_1(z)f_4(z)+f_2(z)f_3(z))\overline{f_1(\mu(z))}=0.\\
\end{split}\right.
\end{equation}
\item Conversely,  consider a normalized potential $\eta$ of the form  \eqref{eq-b1} with meromorphic functions $f_j$. Assume that $f_j$ satisfies
  \eqref{eq-iso-rp2} and that $\eta$ has a meromorphic antiderivative.
 Then the harmonic map $f|_{\lambda=1}$ induced by $\eta$ can be projected to a map $y$ from $S^2$ to $S^{n+2}$.
 Moreover, we have
 \begin{enumerate}
\item
If $y(z,\bar{z},\lambda)$ is not a constant map, then we have
\[y(\mu(z),\overline{\mu(z)},\lambda)=y(z,\bar{z},\lambda),\] and  the harmonic map
$f|_{\lambda=1}$  is the oriented conformal Gauss map of the Willmore surface $y|_{\lambda=1}$.

\item Moreover, $y$ is conformally congruent to some minimal $\R P^2$ if and only if there exists some none-zero time-like vector $\mathrm v\in\R^4_1$ such that $\mathrm v^tI_{1,3}B_1=0$. In particular, $y$ is conformally congruent to some minimal $\R P^2$ if $f_2=f_3$.
\end{enumerate}
\end{enumerate}
\end{theorem}

Note that (2-b) of Theorem \ref{thm-rp2} is a corollary to Theorem 1.3 of \cite{Wa4}, and by (2-b) of Theorem \ref{thm-rp2} we obtain all minimal $\R P^2$ in $S^4$.
We will illustrate the theorem by examples and leave the technical proof of (1) and (2-a) to the appendix.

A class of special solutions to \eqref{eq-iso-rp2} is the following
\begin{lemma}\label{lemma-f_ij}
Retaining the notation and the assumptions of Theorem \ref{thm-rp2}, let
\begin{equation}\label{eq-f_ij}
  f_1=-2mz^{2m+1},\ f_2= f_3=i\sqrt{4m^2-1}z^{2m},\ f_4=-2mz^{2m-1},
\end{equation}
with $m\in \mathbb{Z}^+$.
Then the functions $f_j, j=1,2,3,4,$ satisfiy  \eqref{eq-iso-rp2}.
\end{lemma}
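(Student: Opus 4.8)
\emph{Proof proposal.} The plan is to verify the system \eqref{eq-iso-rp2} by brute-force substitution, since every quantity in sight is an explicit monomial in $z$; the only thing to watch is the parity of the exponents. First I would record the two auxiliary products that appear in \eqref{eq-iso-rp2}: a one-line computation gives $f_1 f_4 = 4m^2 z^{4m}$ and $f_2 f_3 = -(4m^2-1)z^{4m}$, hence
\[
f_1(z) f_4(z) + f_2(z) f_3(z) = z^{4m}.
\]
I would also note in passing that $f_1' f_4' = 4m^2(4m^2-1)z^{4m-2}$ cancels $f_2' f_3' = -4m^2(4m^2-1)z^{4m-2}$, so $f_1'f_4' + f_2'f_3' = 0$ and the $f_j$ do define an isotropic normalized potential of the shape required in Theorem \ref{thm-rp2}.

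Next I would evaluate each $f_j$ at $\mu(z) = -1/\bar z$, keeping track of the sign factors $(-1)^{2m+1} = (-1)^{2m-1} = -1$ and $(-1)^{2m} = 1$. This yields
\[
f_1(\mu(z)) = 2m\,\bar z^{-(2m+1)},\quad f_2(\mu(z)) = f_3(\mu(z)) = i\sqrt{4m^2-1}\,\bar z^{-2m},\quad f_4(\mu(z)) = 2m\,\bar z^{-(2m-1)},
\]
and conjugating (replacing $\bar z$ by $z$ and $i$ by $-i$) gives $\overline{f_1(\mu(z))} = 2m z^{-(2m+1)}$, $\overline{f_2(\mu(z))} = \overline{f_3(\mu(z))} = -i\sqrt{4m^2-1}\,z^{-2m}$, and $\overline{f_4(\mu(z))} = 2m z^{-(2m-1)}$.

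Finally I would substitute these into the four lines of \eqref{eq-iso-rp2} using $f_1 f_4 + f_2 f_3 = z^{4m}$: in the first line $z^{4m}\,\overline{f_4(\mu(z))} = 2m z^{2m+1} = -f_1(z)$; in the second and third lines $z^{4m}\,\overline{f_2(\mu(z))} = -i\sqrt{4m^2-1}\,z^{2m} = -f_2(z) = -f_3(z)$; in the fourth line $z^{4m}\,\overline{f_1(\mu(z))} = 2m z^{2m-1} = -f_4(z)$. So all four identities hold identically on $S^2$. There is no real obstacle: the whole argument is the bookkeeping of exponents and parity signs, and the reason it works is structural — the $f_j$ are rigged so that, up to the scalar $z^{4m}$ and conjugation, the map $z \mapsto -1/\bar z$ interchanges $f_1 \leftrightarrow f_4$ and fixes the pair $f_2 = f_3$, while simultaneously $f_1 f_4 + f_2 f_3$ collapses to a single power of $z$. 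One could close with the remark that, by the converse in Theorem \ref{thm-rp2}, each such potential therefore produces a surface with $\mu^*y = y$ or $\mu^*y = \hat y$, i.e. descending (possibly after passing to the dual) to $\R P^2$.
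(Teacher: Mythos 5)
Your verification is correct and is essentially the direct computation the paper leaves implicit (the lemma is stated without proof, as it follows by routine substitution). The key observations — that $f_1f_4 + f_2f_3 = z^{4m}$, that $\mu(z) = -1/\bar z$ sends $z^{2m\pm 1} \mapsto -\bar z^{-(2m\pm 1)}$ and $z^{2m} \mapsto \bar z^{-2m}$, and that conjugation flips the sign of the $i$ in $f_2 = f_3$ — are exactly what makes all four lines of \eqref{eq-iso-rp2} close up, and your bookkeeping is accurate.
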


Using this lemma, it is easy to write down infinitely many potentials which will lead to different Willmore immersions from $\R P ^2$ into $S^4$.  This family contains as a special case  the well--known Veronese sphere  in $S^4$ \cite{Wang-S4}. Moreover, from \cite{Wa4} one can see  that all these Willmore surfaces are conformally  equivalent to some minimal surfaces in $S^4$.
First we mention the well--known
\begin{example}{\bf Veronese sphere in $S^4$.}\ ~  It is well known (see for example Example 5.21 of \cite{DoWa1}) that  the map
$y: S^2 \rightarrow S^4$, given by the formula
\begin{equation}
y=\left(\frac{2r^2-(1-r^2)^2}{(1+r^2)^2},
\frac{\sqrt{3}(z+\bar{z})(1-r^2)}{(1+r^2)^2},\frac{-i\sqrt{3}(z-\bar{z})(1-r^2)}{(1+r^2)^2},\frac{\sqrt{3}(z^2+\bar{z}^2)}{(1+r^2)^2},\frac{-i\sqrt{3}(z^2-\bar{z}^2)}{(1+r^2)^2}\right),
\end{equation}
defines  a Willmore immersion into $S^2$ which is invariant under the action of $\mu$   given in (\ref{eq-s2-mu}) above.
Moreover, a lift $Y(z,\bar{z}, \lambda)$ of the associated family of $y$ is given by (\cite{DoWa1}, \cite{Wang-S4})
\begin{equation}
\begin{split}
Y(z,\bar{z},\lambda)=&~\frac{1}{2}\left(\frac{1+r^2}{\sqrt{3}}, \frac{2r^2-(1-r^2)^2}{\sqrt{3}(1+r^2) },
\frac{(z+\bar{z})(1-r^2)}{1+r^2 },\frac{-i(z-\bar{z})(1-r^2)}{1+r^2},
\right.\\
&~~~~~~~~~~~~~\left.\frac{\lambda^{-1}z^2+\lambda\bar{z}^2}{1+r^2},\frac{-i(\lambda^{-1}z^2-\lambda\bar{z}^2)}{1+r^2}\right).\\
\end{split}
\end{equation}
From this formula it is easy to verify that $y$ actually is an
``$S^1-$invariant" finite uniton type immersion in the sense of \cite{BuGu}, since one observes
\begin{equation}
Y(z, \bar{z},\lambda) = R_{\lambda} Y(z,\bar{z},\lambda=1),
\end{equation}
where
\[R_{\lambda}=\left(
                \begin{array}{cc}
                  I_4 & 0 \\
                  0 & \tilde{R}_{\lambda} \\
                \end{array}
              \right), \  \tilde{R}_{\lambda}=\left(
                           \begin{array}{cc}
                             \frac{\lambda^{-1}+\lambda}{2}&  \frac{i(\lambda^{-1}-\lambda)}{2} \\
                              \frac{-i(\lambda^{-1}-\lambda)}{2} &  \frac{\lambda^{-1}+\lambda}{2}\\
                           \end{array}
                         \right).\]
One can check easily
\[Y(\mu(z),\overline{\mu(z)},\lambda)=\frac{1}{r^2}R_{\lambda}^2\cdot Y(z,\bar{z},\lambda^{-1})=\frac{1}{r^2}Y(z,\bar{z},\lambda).\]
That is,
\[y(\mu(z),\overline{\mu(z)},\lambda)=[Y(\mu(z),\overline{\mu(z)},\lambda)]=[Y(z,\bar{z},\lambda)]=y(z,\bar{z},\lambda).\]

Moreover, setting $m=1$ in \eqref{eq-f_ij}, we obtain $ f_1=-2z^3,\ f_2= f_3=i\sqrt{3}z^2,\ f_4=-2z$. By Formula (2.7) of \cite{DoWa-sym1} (see also \cite{Wang-S4}),
the corresponding Willmore surfaces form exactly the associate family of the Veronese sphere with
$Area(y)=W(y)+4\pi=12\pi.$
\end{example}

\begin{example} Setting $m=2$ in \eqref{eq-f_ij}, we have $f_1=-4z^5,\ f_2= f_3=i\sqrt{15}z^4,\ f_4=-4z^3.$ So the matrix $\hat B_1$ in its normalized potential is of the form
\[ \hat{B}_1= \left(
                     \begin{array}{cccc}
                      0&  0  \\
                      -4\sqrt{15}z^3 & -4i\sqrt{15}z^3  \\
                      -2z^2(3-5z^2) & -2iz^2(3-5z^2)     \\
                      -2iz^2(3+5z^2) & 2z^2(3+5z^2)     \\
                     \end{array}
                   \right). \]
Let $y$ be the corresponding Willmore surface. By formula (2.7) of \cite{DoWa-sym1} (see also \cite{Wang-S4}),
a lift $Y(z,\bar{z}, \lambda)$ of the associated family of $y$ is given by
\begin{equation}\label{eq-non--ori-example2}
Y(z,\bar{z},\lambda)=\left(
                       \begin{array}{c}
                         (3r^4-4r^2+3)(1+r^2)^2 \\
                         -(3r^8-8r^6+8r^4-8r^2+3) \\
                         \sqrt{15}(z+\bar{z})(1-r^2)(1+r^4) \\
                        -i\sqrt{15}(z-\bar{z})(1-r^2)(1+r^4) \\
                        \sqrt{15}(\lambda^{-1}z^4+\lambda\bar{z}^4) \\
                         i\sqrt{15}(\lambda^{-1}z^4-\lambda\bar{z}^4) \\
                       \end{array}
                     \right)=\left(
                               \begin{array}{c}
                                 y_0 \\
                                 y_1 \\
                                 y_2 \\
                                 y_3 \\
                                 y_4 \\
y_5\\
                               \end{array}
                             \right)
.
\end{equation}
We have
 \begin{equation*}
 \begin{split}
 \langle Y_z (z, \bar z, \lambda), Y_{\bar{z}}(z, \bar z, \lambda)\rangle=30\left(r^4(r^2-2)^2+2r^4+(2r^2-1)^2\right)(1+r^2)^2,\\
\end{split}
\end{equation*}
showing that $y(z, \bar{z},\lambda)$ is an immersion at all $z\in\C$.
Together with
\[y(\mu(z),\overline{\mu(z)},\lambda)=[Y(\mu(z),\overline{\mu(z)},\lambda)]=\left[\frac{1}{r^8}Y(z,\bar{z},\lambda)\right]=y(z,\bar{z},\lambda)\] for $\mu(z)=-\frac{1}{\bar{z}}$ and $z\in\C$, we see that $y$ is a Willmore immersion (without branched points) from  $\R P^2$ to $S^4$. Moreover,
$y=\frac{1}{y_0}\left(y_1,y_2,y_3,y_4,y_5\right)^t$ is in fact a minimal immersion from $\R P^2$ to $S^4$ (see \cite{Wa4}) , with induced metric
\[|y_z|^2=\frac{30(1-4r^2+10r^4-4r^6+r^8)}{(3r^4-4r^2+3)^2(1+r^2)^2}\]
and \[Area(y)=W(y)+4\pi=20\pi.\]
\end{example}

\begin{example} Setting $m=3$ in \eqref{eq-f_ij}, we have $ f_1=-6z^{7},\ f_2= f_3=i\sqrt{35}z^{6},\ f_4=-6z^{5}.$ So its normalized potential is of the form
$$ \hat{B}_1= \left(
                     \begin{array}{cccc}
                      0&  0  \\
                      -6\sqrt{35}z^5 & -6i\sqrt{35}z^5  \\
                      -3z^4(5-7z^2) & -3iz^2(5-7z^2)     \\
                      -3iz^4(5+7z^2) & 3z^2(5+7z^2)     \\
                     \end{array}
                   \right). $$
Let $y$ be the corresponding Willmore surface. By formula (2.7) of \cite{DoWa-sym1} (see also \cite{Wang-S4}), a lift $Y(z,\bar{z}, \lambda)$ of the associated family of $y$ is given by
\begin{equation}\label{eq-non--ori-example3}
Y(z,\bar{z},\lambda)=\left(
                       \begin{array}{c}
                         5+7r^2+7r^{12}+5r^{14} \\
                         -5+7r^2+7r^{12}-5r^{14} \\
                         \sqrt{35}(z+\bar{z})(1-r^{12}) \\
                        -i\sqrt{35}(z-\bar{z})(1-r^{12}) \\
                        \sqrt{35}(\lambda^{-1}z^6+\lambda\bar{z}^6)(1+r^2) \\
                         i\sqrt{35}(\lambda^{-1}z^6-\lambda\bar{z}^6)(1+r^2) \\
                       \end{array}
                     \right)=\left(
                               \begin{array}{c}
                                 y_0 \\
                                 y_1 \\
                                 y_2 \\
                                 y_3 \\
                                 y_4 \\
y_5\\
                               \end{array}
                             \right)
.
\end{equation}
We have
 \begin{equation*}
 \begin{split}
 \langle Y_z (z, \bar z, \lambda), Y_{\bar{z}}(z, \bar z, \lambda)\rangle&=70(1+36r^{10}+70r^{12}+36r^{14}+r^{24})
\end{split} \end{equation*}
showing that $Y(\mu(z),\overline{\mu(z)},\lambda)$ is an immersion at all $z\in\C$.
Together with
\[y(\mu(z),\overline{\mu(z)},\lambda)=[Y(\mu(z),\overline{\mu(z)},\lambda)]=\left[\frac{1}{r^{14}}Y(z,\bar{z},\lambda)\right]=y(z,\bar{z},\lambda)\]
for $\mu(z)=-\frac{1}{\bar{z}}$ and $z\in\C$, we see that $y$ is a Willmore immersion (without branch points) from  $\R P^2$ to $S^4$. Moreover, $y=\frac{1}{y_0}\left(y_1,y_2,y_3,y_4,y_5\right)^t$ is in fact a minimal immersion from $\R P^2$ to $S^4$ (see \cite{Wa4,Wang-S4}), with induced metric
\[|y_z|^2=\frac{70\left( 1+36r^{10}+70r^{12}+36r^{14}+r^{24} \right)}{y_0^2},\]
and
\[Area(y)=W(y)+4\pi=28\pi.\]
\end{example}

\begin{remark}\
\begin{enumerate}
 \item In \cite{Ishihara}, using  a formula of Bryant \cite{Bryant1982} constructing all   isotropic minimal surfaces in $S^4$, Ishihara   provided a description of non--orientable, isotropic harmonic maps into $S^4$  but the immersion property is not discussed \cite{Ishihara}.
It is difficult to compute an immersion explicitly following his approach. Therefore, to the authors' best knowledge, our examples  above are, with the exception of the Veronese spheres, the first explicitly known non--orientable minimal immersions from $\R P^2$ into $S^4.$ Actually, one can produce many more  examples using Lemma \ref{lemma-f_ij}, Theorem \ref{thm-rp2} and  formula (2.7) of \cite{DoWa-sym1} (see also \cite{Wang-S4}).

\item It is straightforward to see that the above examples are all equivariant in the sense of formula (3.1) of \cite{Ejiri-equ}. Moreover, the Veronese sphere is homogeneous and the other two examples given above are not homogeneous.

\item  Using  \cite{DoWa-sym1} and \cite{Wang-S4}, one can show that  Lemma \ref{lemma-f_ij} gives in fact all equivariant isotropic Willmore immersions from $\R P^2$ to $S^4$. As a consequence, all equivariant isotropic Willmore immersions from $\R P^2$ to $S^4$ are conformally congruent to some minimal $\R P^2$ in $S^4$. But if one allows the maps to have branch points, there will be new examples \cite{Wang-S4}.
\item It is  unknown,  whether all  isotropic Willmore immersions from $\R P^2$ to $S^4$ are conformally congruent to some minimal $\R P^2$ in $S^4$ or not.
\end{enumerate}
\end{remark}


\section{Appendix A: Proof of Theorem \ref{thm-rp2}}

\begin{proof} By Theorem 5.6 of \cite{DoWa1}, we see that the normalized potential of $y$ is of the form stated in Theorem \ref{thm-rp2}. Let $f$ be the conformal Gauss map of $y$. So
\[f(\mu(z),\overline{\mu(z)},\lambda)=M(\lambda)f(z,\bar{z},\lambda^{-1})^{\star}.\]
Let $\eta$ be the normalized potential of $f$ and
 let $C$ be a solution to $\dd C=C\eta,$ $C(0,\lambda)=I$.
By Corollary \ref{cor-non--ori}, $f(\mu(z),\overline{\mu(z)},\lambda)=M(\lambda)f(z,\bar{z},\lambda^{-1})^{\star}$ if and only if \eqref{eq-mu-CF} holds, i.e.
$$ (\overline{C(z,\lambda^{-1})})^{-1}\cdot M(\lambda)^{-1} \cdot C(\mu(z),\lambda)\in \Lambda^+ SO^+(1,n+3,\C)_{\sigma}.$$
Hence it suffices to show in this case that \eqref{eq-mu-CF} is equivalent to \eqref{eq-iso-rp2}.

Let $F=C\cdot C_+$ be the extended frame of $y$, then we obtain by \cite{Wang-S4} that
\[F(z,\bar{z},\lambda)=R_{\lambda}F(z,\bar{z}, 1)R_{\lambda}^{-1},\]
where
\[R_{\lambda}=\left(
                \begin{array}{cc}
                  I_4 & 0 \\
                  0 & \tilde{R}_{\lambda} \\
                \end{array}
              \right), \  \tilde{R}_{\lambda}=\left(
                           \begin{array}{cc}
                             \frac{\lambda^{-1}+\lambda}{2}&  \frac{i(\lambda^{-1}-\lambda)}{2} \\
                              \frac{-i(\lambda^{-1}-\lambda)}{2} &  \frac{\lambda^{-1}+\lambda}{2}\\
                           \end{array}
                         \right).
\]
From this we infer that
\[\begin{split}F(\mu(z),\overline{\mu(z)},\lambda)&=R_{\lambda}F(\mu(z),\overline{\mu(z)},1)R_{\lambda}^{-1}\\
&=R_{\lambda} F(z,\bar{z},1)k(z,\bar{z})R_{\lambda}^{-1}\\
&=R_{\lambda}^2F(z,\bar{z},\lambda^{-1})R_{\lambda}^{-1}P_3\cdot P_3k(z,\bar{z})\cdot R_{\lambda}^{-1},
\end{split}\]
for $P_3=\hbox{diag}(1,1,1,-1,1,-1)$ and $P_3 k(z,\bar{z})\in SO^+(1,3)\times SO(2)$. Since \[R_{\lambda} P_3k(z,\bar{z})=P_3k(z,\bar{z}) R_{\lambda}  \hbox{ and } R_{\lambda}P_3=P_3R_{\lambda}^{-1},\] we obtain
\[F(\mu(z),\overline{\mu(z)},\lambda)=R_{\lambda}^2F(z,\bar{z},\lambda^{-1}) k(z,\bar{z}) .\]
Therefore we have $M(\lambda)=R_{\lambda}^2$. Note that  $M(\lambda)|_{\lambda=1}=I$ holds automatically now.

To compute \eqref{eq-mu-CF}, it will be convenient to transform $C(z,\lambda)$ into an upper triangular matrix, which has been used in \cite{Wa3}, \cite{Wa4}  and \cite{Wang-S4}. Here we retain the notation of \cite{Wang-S4}.
Let $F_-(z,\lambda)=\mathcal{P}(C)$ and $\chi(\lambda)=\mathcal{P}(M(\lambda))$. Then
\[F_-(\lambda)=\left(
                               \begin{array}{ccccccccc}
                                 1&   0& 0&0 &0&0&0&0 \\
                                  0&  1&  \lambda^{-1}f_1&\lambda^{-1}f_2 &\lambda^{-1}f_3&\lambda^{-1}f_4&\lambda^{-2}g_3&0 \\
                                   0& 0&1& 0 &0&0&-\lambda^{-1}f_4&0 \\
                                 0& 0&0 &1&   0&0&-\lambda^{-1}f_3&0 \\
                                  0& 0&0 &0& 1& 0&-\lambda^{-1}f_2&0 \\
                                   0& 0&0 &0&0&1& -\lambda^{-1}f_1&0 \\
                                   0& 0&0 &0&0&0& 1&0 \\
                                 0&   0& 0&0 &0&0&0&1 \\
                               \end{array}
                             \right)\]
                             with $g_3=-f_1f_4-f_2f_3$ and
\[\chi(\lambda)=\left(
                               \begin{array}{ccccccccc}
                                 1&   0& 0&0 &0&0&0&0 \\
                                 0 & \lambda^{-2}  & 0&0 &0&0&0&0 \\
                                   0& 0&1& 0 &0&0&0&0 \\
                                 0& 0&0 &1&   0&0&0&0 \\
                                  0& 0&0 &0& 1& 0&0&0 \\
                                   0& 0&0 &0&0&1& 0&0 \\
                                   0& 0&0 &0&0&0& \lambda^2&0 \\
                                 0&   0& 0&0 &0&0&0&1 \\
                               \end{array}
                             \right).\]
Moreover, we obtain                             $$\chi(\lambda)^{-1}F_-(\mu(z),\lambda)=\left(
                               \begin{array}{ccccccccc}
                                 1&   0& 0&0 &0&0&0&0 \\
                                  0& \lambda^2&  \lambda \hat{f}_1&\lambda \hat{f}_2 &\lambda \hat{f}_3&\lambda \hat{f}_4& \hat{g}_3&0 \\
                                   0& 0&1& 0 &0&0&-\lambda^{-1}\hat{f}_4&0 \\
                                 0& 0&0 &1&   0&0&-\lambda^{-1}\hat{f}_3&0 \\
                                  0& 0&0 &0& 1& 0&-\lambda^{-1}\hat{f}_2&0 \\
                                   0& 0&0 &0&0&1& -\lambda^{-1}\hat{f}_1&0 \\
                                   0& 0&0 &0&0&0& \lambda^{-2}&0 \\
                                 0&   0& 0&0 &0&0&0&1 \\
                               \end{array}
                             \right)$$
and
\[\begin{split}\mathcal{P}(\overline{C(z,\lambda^{-1})}^{-1})&=\check{J}_8\overline{F_-(\lambda^{-1})}^t\check{J}_8\\
&= \left(
                               \begin{array}{ccccccccc}
                                 1&   0& 0&0 &0&0&0&0 \\
                                  0&  1&0&0&0&0&0&0 \\
                                   0&  \lambda^{-1} \bar{f}_1&1& 0 &0&0&0&0 \\
                                  0& \lambda^{-1} \bar{f}_3&0 &1& 0& 0& 0&0 \\
                                 0& \lambda^{-1} \bar{f}_2 &0 &0&  1&0&0&0 \\
                                   0& \lambda^{-1} \bar{f}_4&0 &0&0&1& 0&0 \\
                                   0& \lambda^{-2} \bar{g}_3&-\lambda^{-1} \bar{f}_4 &-\lambda^{-1}\bar{f}_2&-\lambda^{-1}\bar{f}_3& -\lambda^{-1}\bar{f}_1& 1&0 \\
                                 0&   0& 0&0 &0&0&0&1 \\
                               \end{array}
                             \right).\end{split}\]
Here $\hat{f}_j=f_j\circ\mu,$ $j=1,2,3,4$, and $\hat{g}_3=g_3\circ\mu$.

Finally we obtain that  $\mathcal{P}(\overline{C(z,\lambda^{-1})}^{-1})\mathcal{P}(M(\lambda)^{-1})\mathcal{P}(C(\mu(z),\lambda))$ is of the form
\[\left(
                               \begin{array}{ccccccccc}
                                 1&   0& 0&0 &0&0&0&0 \\
                                  0& \lambda^2&  \lambda \hat{f}_1&\lambda \hat{f}_2 &\lambda \hat{f}_3&\lambda \hat{f}_4&\hat{g}_3 &0 \\
                                  0& \lambda \bar{f}_1& 1+\bar{f}_1\hat{f}_1&  \bar{f}_1\hat{f}_2 &  \bar{f}_1\hat{f}_3&  \bar{f}_1 \hat{f}_4&  \lambda^{-1}(\bar{f}_1\hat{g}_3-\hat{f}_4)&0 \\
                                  0& \lambda \bar{f}_3& \bar{f}_3\hat{f}_1& 1+\bar{f}_3 \hat{f}_2 &  \bar{f}_3\hat{f}_3&  \bar{f}_3 \hat{f}_4&  \lambda^{-1}(\bar{f}_3\hat{g}_3-\hat{f}_3)&0 \\
                                  0& \lambda \bar{f}_2& \bar{f}_2\hat{f}_1& \bar{f}_2 \hat{f}_2 & 1+ \bar{f}_2\hat{f}_3&  \bar{f}_2 \hat{f}_4&  \lambda^{-1}(\bar{f}_2\hat{g}_3-\hat{f}_2)&0 \\
                                  0& \lambda \bar{f}_4&  \bar{f}_4 \hat{f}_1&  \bar{f}_4\hat{f}_2 &  \bar{f}_4\hat{f}_3& 1+\bar{f}_4\hat{f}_4&  \lambda^{-1}(\bar{f}_4\hat{g}_3-\hat{f}_1)&0 \\
                                   0& \bar{g}_3& w_{73}&w_{74} &w_{75} & w_{76} & w_{77}&0 \\
                                 0&   0& 0&0 &0&0&0&1 \\
                               \end{array}
                             \right)\]
                    with
\[\begin{split}&w_{73}=\lambda^{-1}(\bar{g}_3\hat{f}_1-\bar{f}_4),\ w_{74}=\lambda^{-1}(\bar{g}_3\hat{f}_2-\bar{f}_2),\
 w_{75}=\lambda^{-1}(\bar{g}_3\hat{f}_3-\bar{f}_3),\ w_{76}=\lambda^{-1}(\bar{g}_3\hat{f}_4-\bar{f}_1),\\
&w_{77}=\lambda^{-2}(1+\bar{g}_3\hat{g}_3+\bar{f}_4\hat{f}_4+\bar{f}_2\hat{f}_3+\bar{f}_3\hat{f}_2+\bar{f}_1\hat{f}_1).
\end{split}\]
Therefore, \eqref{eq-mu-CF} is equivalent to
$$\bar{g}_3\hat{f}_1-\bar{f}_4=\bar{g}_3\hat{f}_3-\bar{f}_3=\bar{g}_3\hat{f}_2-\bar{f}_2=\bar{g}_3\hat{f}_4-\bar{f}_1=0,$$
$$\bar{f}_1\hat{g}_3-\hat{f}_4=\bar{f}_2\hat{g}_3-\hat{f}_2=\bar{f}_3\hat{g}_3-\hat{f}_3=\bar{f}_4\hat{g}_3-\hat{f}_1=0,$$
and
$$1+g_3(z)\overline{{g}_3(\mu(z))}+f_1(z)\overline{{f}_1(\mu(z))}+f_2(z)\overline{{f}_3(\mu(z))}+f_3(z)\overline{{f}_2(\mu(z))}+f_4(z)\overline{f_4(\mu(z))}=0.$$
Since $g_3=-f_1f_4-f_2f_3$, one verifies that these equations are equivalent to \eqref{eq-iso-rp2}.

Then (1) and (2-a) of Theorem \ref{thm-rp2} holds.

\end{proof}

\section{Amendment and correction  to:   ``On symmetric Willmore surfaces in spheres I: the orientation preserving case"[Differ.   Geom.   Appl.  43 (2015) 102-129]}

$(\mathrm{A})$. In \cite{DoWa-sym1} we consider  Willmore surfaces $y:M\rightarrow S^{n+2}$
and symmetries  $(\gamma,R)$ defined by the relation:
\[y(\gamma(p))=Ry(p),\]
with $\gamma:M\rightarrow M$ a conformal transformation of $M$ and $R$ a conformal transformation of $S^{n+2}$.
In this case we then obtain $f(\gamma(p))= \hat Rf(p)$, where $\hat R$ denotes the element of $O^+(1,n+3)$ which acts on the light cone containing $S^{n+2}$ and which represents $R$
after projection to $S^{n+2}$.

In $(4.3)$, the statement concerning S-Willmore surfaces in Theorem 4.1, page 109 of \cite{DoWa-sym1}, we stated \[\hbox{  $f(\gamma(p))=\hat{f}(\gamma(p))=\hat Rf(p)$}.\]
Here $\hat f$ denotes the conformal Gauss map of $\hat y$,  the dual surface of $y$. We did not discuss orientations then. But, since in the present paper we have addressed the issue of orientations in detail, we would like to point out:

Since for an S-Willmore surface $y$  with a non-degenerate dual surface $\hat y$, their conformal Gauss map maps to the same 4--dimensional Lorentzian subspace, but with opposite orientations \cite{Ma,DoWa1}, using our present notation
(see \eqref{eq-star-op} for the definition of $\star$) we should write:
\[f(\gamma(p))= \hat Rf(p)^\star.\]
So for a symmetry we have (as before)  the following transformation formulas for $y$
\begin{equation}\label{eq-correct-y}
 y(\gamma(p))=Ry(p) \hbox{ if and only if }f(\gamma(p))= \hat Rf(p)
\end{equation}
and  for a transformation from $y$ to its non-degenerate dual surface $\hat y$ we have the formulas
 \begin{equation}\label{eq-correct-haty}
 \hat{y}(\gamma(p))=Ry(p), \hbox{ if and only if }(f(\gamma(p)))= \hat Rf(p)^\star.
\end{equation}

  This notational issue only occurs in Theorem 4.1 and Theorem 4.11 of \cite{DoWa-sym1}.\vspace{2mm}

 $(\mathrm{B})$. We would also like to take this opportunity to correct some factors occurring in Example 5.3 of \cite{DoWa-sym1} :
In this example,  we considered examples of branched isotropic Willmore surfaces with positive genus.  To obtain the condition $f_1'f_4'+f_2'f_3'=0$, the meromorphic differentials in (5.4) of \cite{DoWa-sym1} should satisfy the relations:
\[\dd f_1=\dd \mathbf{f},\ \dd f_2=df_3=\mathbf{h}\dd\mathbf{f},\ \dd f_4=-\mathbf{h}^2\dd\mathbf{f}.\]
Here $\dd\mathbf{f}$, $\mathbf{h}\dd\mathbf{f}$ and $\mathbf{h}^2\dd\mathbf{f}$ are linearly independent meromorphic 1-forms on a compact Riemann surface $M$ with no residues and no periods (see the proof of the main theorem in \cite{Yang}).
And the equation (5.5) of \cite{DoWa-sym1} should be
\begin{equation}\label{eq-meromorphic functions-r3-deform}
\dd f_1=\dd\mathbf{f},\ \dd f_2=\mathbf{h} \dd\mathbf{f},\ \dd f_3=t_0\mathbf{h}\dd\mathbf{f},\ \dd f_4=-t_0\mathbf{h}^2\dd\mathbf{f},\end{equation}
$t_0\in\C^*$.
Then the resulting (branched) Willmore surface $y$ is globally defined on $M$. Moreover, by Theorem 1.3 of \cite{Wa4}, we have:\begin{enumerate} \item $y$ is not  conformally equivalent to any minimal surface in any space form when
$ t_0\not\in\R$;\item $y$ is conformally equivalent to some minimal surface in $S^4$ when $t_0>0$; \item$y$ is  conformally equivalent to some minimal surface in $\mathbb{H}^4$ when $t_0<0$ .
\end{enumerate}
\vspace{3mm}

{\small{\bf Acknowledgements} The authors are thankful to Prof. David Brander for the helps in the drawing of pictures.
Part of this work was done when the second named author visited the Department of Mathematics of TU-M\" unchen. He would like to express his sincere gratitude for both the hospitality and financial support. The second named author was partly supported by the Project 11571255 and 11831005 of NSFC.
 Some of the visits were supported partially by a grant  for the Project Centered Exchange Program between the State of  Bavaria and  P.R. China.

{\footnotesize
\def\refname{Reference}

}
\vspace{2mm}
{\footnotesize \begin{multicols}{2}   
Josef F. Dorfmeister

Fakult\" at f\" ur Mathematik,

TU-M\" unchen, Boltzmann str. 3,

D-85747, Garching, Germany

{\em E-mail address}: dorfm@ma.tum.de\\

Peng Wang

College of Mathematics \& Informatics, FJKLMAA,

Fujian Normal University, Qishan Campus,

Fuzhou 350117, P. R. China

{\em E-mail address}: {pengwang@fjnu.edu.cn}

\end{multicols}}

\end{document}